\documentclass[11pt]{amsart}
\addtolength{\textwidth}{2.6cm}
\addtolength{\evensidemargin}{-1.2cm}
\addtolength{\oddsidemargin}{-1.2cm}
\addtolength{\topmargin}{-.1cm}
\addtolength{\textheight}{.3cm}
\usepackage{amsmath,amsthm,amsfonts,amssymb,verbatim,eucal,bm}
\usepackage[all]{xy}
\usepackage{url}

\numberwithin{equation}{section}

\newtheorem{thm}[equation]{Theorem} 
\newtheorem{prop}[equation]{Proposition}
\newtheorem{lemma}[equation]{Lemma} 
\newtheorem{cor}[equation]{Corollary}
\newtheorem{example}[equation]{Example}
\newtheorem{remark}[equation]{Remark}
\newtheorem{definition}[equation]{Definition}

\DeclareMathOperator{\coh}{H}

\DeclareMathOperator{\sgn}{sgn}
\DeclareMathOperator{\Sym}{Sym}
\DeclareMathOperator{\Span}{Span}
\DeclareMathOperator{\Tor}{Tor}

\newcommand{\ontorightarrow}{\twoheadrightarrow}
\newcommand{\ttp}{A\hspace{-.1ex}\otimes_{\tau}\hspace{-.3ex} B}
\newcommand{\ttpp}{(\ttp)}

\newcommand{\DOT}{\setlength{\unitlength}{1pt}\begin{picture}(2.5,2)
               (1,1)\put(2,2.5){\circle*{2}}\end{picture}}
\newcommand{\bu}{\DOT}

\newcommand{\N}{{\mathbb N}}

\renewcommand{\ker}{\mbox{\rm Ker\,}}

\newcommand{\ot}{\otimes}

\newcommand{\Ug}{\mathcal{U}({\mathfrak{g}})}

\newcommand{\Wedge}{\textstyle\bigwedge}

\newcommand{\tsigma}{\sigma}
\newcommand{\tdelta}{\delta}

\begin{document}
\begin{abstract}
We build 
resolutions for general twisted tensor products 
of algebras.
These bimodule and module resolutions 
unify many constructions in the literature and 
are suitable for computing Hochschild (co)homology and  
more generally Ext and Tor for (bi)modules.
We analyze in detail the case of 
Ore extensions, consequently obtaining 
 Chevalley-Eilenberg resolutions 
for universal enveloping algebras of Lie algebras
(defining the cohomology of
Lie groups and Lie algebras).
Other examples include  semidirect products, crossed products,
Weyl algebras, Sridharan enveloping algebras, and Koszul pairs.
\end{abstract}
\title[Twisted tensor products]
{Resolutions for twisted tensor products}

\date{July 24, 2018.}
\thanks{Key words: twisted tensor product, projective resolutions, Hochschild (co)homology, augmented algebras.}
\subjclass[2010]{16E40}
\author{A.V.\ Shepler}
\address{Department of Mathematics, University of North Texas,
Denton, Texas 76203, USA}
\email{ashepler@unt.edu}
\author{S.\  Witherspoon}
\address{Department of Mathematics\\Texas A\&M University\\
College Station, Texas 77843, USA}\email{sjw@math.tamu.edu}
\thanks{
The first author was partially supported by
Simons grant 429539.
The second author was partially supported by
NSF grant DMS-1401016.}

\maketitle


\section{Introduction}

Motivated by questions in noncommutative geometry, 
\v{C}ap, Schichl, and Van\v{z}ura~\cite{CSV} introduced a very general 
{\em twisted tensor product} of 
algebras to replace  the (commutative) tensor product.
Their examples included noncommutative 2-tori and crossed products of C$^*$-algebras
with groups. 
Many other algebras of interest
arise as twisted tensor product algebras: crossed products with Hopf algebras,
algebras with triangular decomposition (e.g., universal enveloping algebras
of Lie algebras and quantum groups), 
braided tensor products defined by  R-matrices,
and other biproduct constructions.
In fact, twisted tensor product algebras are rather copious: 
If an algebra is isomorphic to $A\ot B$
as a vector space for two of its subalgebras $A$ and $B$ 
under the canonical inclusion maps, 
then it must be isomorphic to a twisted tensor product
$\ttp$ for some twisting map $\tau: B\ot A\rightarrow A\ot B$
(see~\cite{CSV}).

Modules over a twisted tensor product algebra
arise from tensoring together modules for the individual algebras:
If $M$ and $N$ are modules over algebras $A$ and $B$, respectively,
compatible with a twisting map $\tau$,
then $M\ot N$ adopts the structure of a module over
$\ttp$.
We describe in this note a general 
method to twist together resolutions
of $A$-modules and $B$-modules in order to construct
resolutions for the corresponding modules over
the twisted tensor product $\ttp$.
A similar method works for bimodules. 
In particular,
we twist together resolutions of algebras
over a field to obtain a resolution 
for a twisted tensor product algebra as a bimodule over itself.

We are motivated by a desire to understand deformations 
of twisted tensor products and to
describe the homology theory 
in terms of the homology of the original factor algebras.  
For example, under some finiteness assumptions, 
the Hochschild cohomology of a tensor product of algebras 
is the  tensor product of their Hochschild cohomology rings.
A similar statement is true of the cohomology of augmented algebras. 
Both results hold because the tensor product of projective resolutions
for the factor algebras is a 
projective resolution for the tensor product of the algebras.

In some particular settings, similar homological constructions 
have appeared for modified versions of the tensor product of algebras.
We mention just a few examples.
Gopalakrishnan and Sridharan~\cite{GS} 
constructed  resolutions for modules of Ore extensions. 
Bergh and Oppermann~\cite{BerghOppermann08}  twisted
resolutions when the twisting arises from a  bicharacter on grading groups.
Jara Martinez, L\'opez Pe\~na, and \c{S}tefan~\cite{MLS} worked with Koszul pairs.
Guccione and Guccione~\cite{GG99,GG} built resolutions
for twisted tensor products, in particular crossed products with Hopf algebras,
out of bar and Koszul resolutions of the factor algebras. 
We adapted this last  construction 
in~\cite{quad}
to handle more
general resolutions for the case of skew group algebras
in order to understand deformations.
Walton and the second author generalized these resolutions to 
smash products with Hopf algebras in~\cite{WW1}.

In this paper, we unify many of these previous constructions and
provide methods useful in new settings
for finding resolutions of modules over twisted tensor product
algebras:
We show very generally that projective 
resolutions for bimodules of two factor algebras 
can be twisted together to construct  
a projective resolution for the resulting bimodule 
for the twisted tensor product given some compatibility conditions.
This twisting of resolutions provides an efficient means for computing
and handling Hochschild (co)homology in particular. 
A similar construction applies to projective (left) module resolutions used,
for example, to compute
(co)homology of augmented algebras. 

We verify that many known resolutions may be viewed as twisted 
resolutions in this way, 
including some of those mentioned above.
We give details in the case of  Ore extensions.
In particular, the bimodule 
Koszul resolution of a universal enveloping algebra
$\Ug$ is a twisted resolution
when $\mathfrak g$ is a finite dimensional supersolvable Lie algebra.
More general Lie algebras can be
handled  via triangular decomposition.
Our method also leads to standard resolutions
for Weyl algebras and some Sridharan enveloping algebras.
For an Ore extension, we adapt results of 
Gopalakrishnan and Sridharan~\cite{GS}  to construct 
twisted product resolutions of  modules. 
We thus  regard the Chevalley-Eilenberg complex of $\Ug$ 
as a twisted product resolution.
This defines Lie algebra and Lie group cohomology
in terms of an iterative twisting of resolutions.

In Section~\ref{sec:preliminaries}, we give definitions and some preliminary 
results.
Bimodule twisted tensor product complexes are constructed in 
Section~\ref{sec:construction} and we show they 
give projective resolutions in Theorem~\ref{maintheorem1}.
Section~\ref{sec:Ore} gives applications
to some types of  Ore extensions.
We construct twisted tensor product complexes for
resolving  modules 
in Section~\ref{sec:construction2},
and we show these complexes are projective resolutions 
in Theorem~\ref{maintheorem2}.
Applications to Ore extensions appear in Section~\ref{sec:Ore2}. 

We fix a field $k$ of arbitrary characteristic throughout.
All tensor products are over $k$ unless
otherwise indicated, i.e., $\otimes=\otimes_k$,
and all algebras are $k$-algebras.
Modules are left modules unless otherwise described. 

\section{Twisted tensor product algebras and compatible
resolutions}\label{sec:preliminaries}
In this section, we recall  twisted tensor product algebras
from~\cite{CSV} and define a compatibility condition
necessary for twisting resolutions together.  
Examples include skew group algebras and
crossed products with Hopf algebras~\cite{Mo},   
twisted tensor products given by  bicharacters of grading groups~\cite{BerghOppermann08},
braided products arising from R-matrices~\cite{Manin},   
two-cocycle twists of Hopf algebras~\cite{RadfordSchneider08}, and more.  

Let $A$ and $B$ be associative algebras over $k$
with multiplication maps $m_A:A\ot A\rightarrow A$ and 
$m_B:B\ot B\rightarrow B$ and multiplicative identities $1_A$ and $1_B$,
respectively. 
We write $1$ for the identity map on any set.

\vspace{1ex}

\subsection*{Twisted tensor products}
A {\em twisting map} $$\tau: B \ot A \to A \ot B$$
is a bijective $k$-linear map 
for which 
$
\tau(1_B\ot a) = a\ot 1_B  \ \mbox{ and } \ 
\tau(b \ot 1_A) = 1_A \ot b
$
for all $a\in A$ and $b\in B$,  and
\begin{equation}\label{eqn:taumm}
\tau \circ (m_B \ot m_A) = 
(m_A \ot m_B) \circ (1 \ot \tau \ot 1)
\circ (\tau \otimes \tau) \circ (1 \ot \tau \ot 1)
\end{equation}
as maps $B\ot B\ot A\ot A \rightarrow A\ot B$. 
The 
{\it twisted tensor product algebra}
$\ttp$ is
the vector space $A\otimes B$ 
together with multiplication
$m_\tau$ given by such a twisting map $\tau$.
By~\cite[Proposition/Definition~2.3]{CSV}, the algebra
$\ttp$ is associative. 

Note that the left-right distinction in a twisted
tensor product algebra is artificial since
$A\ot_\tau B \cong B\ot_{\tau^{-1}} A$.
Indeed, one might identify $\ttp$ with the algebra generated
by $A$ and $B$ (so that $A$ and $B$ are subalgebras)
with relations given by Equation~(\ref{eqn:taumm}).

If $A$ and $B$ are $\N$-graded algebras, we take the standard
$\N$-grading on $A\ot B$ and $B\ot A$ and
say a twisting map $\tau$
is {\em strongly graded} if it takes 
$B_j\ot A_i$ to $A_i\ot B_j$ for all $i,j$
following Conner and Goetz~\cite{ConnerGoetz}.
(Note that~\cite{MLS} leave off the adjective {\em strongly}.)
In this case, the twisted tensor product algebra $\ttp$ is $\N$-graded. 

\vspace{2ex}

\begin{example}\label{ex:weyl} 
{\em
The Weyl algebra $\mathcal{W} =k\langle x,y \rangle/(xy-yx-1)$ 
is isomorphic to the twisted tensor product $\ttp$ of $A =k[x]$ and $B =k[y]$
with twisting map $\tau: B\ot A \rightarrow A \ot B$
defined by $\tau(y \otimes x) = x \otimes y - 1\ot 1$.
Likewise, the Weyl algebra $\mathcal{W}_n$ 
on $2n$ indeterminates,
$$
\mathcal{W}_{n}=k\langle x_1,\ldots, x_n, y_1, \ldots, y_n \rangle
/ (x_ix_j-x_jx_i, \ y_i y_j-y_jy_i, \ x_iy_j-y_jx_i-\delta_{i,j}:
1\leq i , j \leq n )\, , 
$$
is isomorphic to a twisted tensor product. 
These are examples of (iterated) Ore extensions, which we consider
in detail in Section~\ref{sec:Ore}. 
}
\end{example}

\vspace{2ex}

\begin{example}\label{ex:sga}
{\em 
A skew group algebra $S\rtimes G$ for a finite group $G$
acting on an algebra $S$ by automorphisms is isomorphic to the
twisted tensor product $kG \ot_{\tau} S$ of the group algebra $kG$
and of $S$. 
The twisting map $\tau$ is defined by
$\tau(s\ot g) =\ g\ot g^{-1}(s)$ for $s\in S$ and 
$g\in G$.  We consider the special case that $S$ is a Koszul algebra 
 at the end of Section~\ref{sec:construction}. 
}\end{example}

\vspace{2ex}

\subsection*{Bimodules over twisted tensor products}
We fix
a twisting map $\tau: B\ot A\rightarrow A\ot B$
for $k$-algebras $A$ and $B$.

\begin{definition}\label{compatible} {\em 
An $A$-bimodule $M$
is  {\em compatible with $\tau$} 
if there is a bijective $k$-linear  map $\tau^{}_{B,M}: B\ot M\rightarrow M\ot B$ 
commuting with the bimodule structure of $M$ and multiplication in $B$,
i.e., as maps on $B\ot B\ot M$ and on $B\ot A\ot M\ot A$,
respectively, 
\begin{align}
\label{compatible2}   
\tau^{}_{B,M}(m^{}_B\ot 1)  &=  (1\ot m^{}_B) (\tau^{}_{B,M}\ot 1)
   (1\ot \tau^{}_{B,M}) \qquad\text{ and } \\
\label{compatible3}
  \tau^{}_{B,M}(1\ot \rho^{}_{A,M}) &=
   (\rho^{}_{A,M}\ot 1)(1\ot 1\ot\tau) 
   (1\ot \tau^{}_{B,M}\ot 1)(\tau\ot 1\ot 1)\, ,
\end{align}
where $\rho^{}_{A,M}: A\ot M\ot A  \rightarrow M$
is the bimodule structure map.
If $A$ is graded and $M$ is a graded $A$-bimodule, we say that
$M$ is {\em compatible with a strongly graded twisting map} $\tau$ if
there is a map $\tau^{}_{B,M}$ as above that takes $B_i\ot M_j$
to $M_j\ot B_i$ for all $i,j$. 
}
\end{definition}

\vspace{2ex}

\begin{remark}{\em 
Note that the above definition applies to $B$-bimodules
as well as $A$-bimodules by reversing the role of $A$ and $B$.
Indeed, we apply the definition to the algebra $B$,
the twisted tensor product $B\ot_{\tau^{-1}} A$, and the twisting
map $\tau^{-1}$ to obtain conditions
for a $B$-bimodule $N$ to be compatible
with $\tau^{-1}$.
We may rewrite these conditions using
the convenient notation $\tau^{}_{N,A}=(\tau^{-1}_{A,N})^{-1}$.
We obtain an equivalent right version of the above definition: 
A given $B$-bimodule $N$ is {\em compatible}
with $\tau^{-1}$ when there is some bijective $k$-linear map
$\tau^{}_{N,A}: N\ot A \rightarrow A\ot N$ satisfying 
\begin{align}
\label{compatible1}   
\tau^{}_{N,A}(1\ot m_A)  &=  (m^{}_A\ot 1) (1\ot \tau^{}_{N,A})
   (\tau_{N,A}\ot 1)\qquad\text{and}\\
\label{compatible4}
\tau^{}_{N,A}(\rho^{}_{B,N}\ot 1) &= 
  (1\ot \rho^{}_{B,N})(\tau\ot 1\ot 1)(1\ot \tau^{}_{N,A}\ot 1)
   (1\ot 1\ot \tau)\, ,
\end{align}
as maps on $N\ot A\ot A$ and on $B\ot N\ot B\ot A$, respectively,
where
$\rho^{}_{B,N}: B\ot N\ot B \rightarrow N$ is the bimodule structure map.
}\end{remark}

\vspace{2ex}

In light of the last remark,
we will say a bimodule is {\em compatible with $\tau$} 
when it is either an $A$-bimodule compatible with $\tau$
or a $B$-bimodule compatible with $\tau^{-1}$,
since one often identifies $A\ot_\tau B$ and the isomorphic algebra
$B \ot_{\tau^{-1}} A$ in practice.

\vspace{2ex}

\begin{remark}{\em
An $A$-bimodule $M$
is  {compatible with the twisting map $\tau$}
exactly when 
there is a bijective $k$-linear  map
$\tau^{}_{B,M}: B\ot M\rightarrow M\ot B$ 
making the following diagram commute:
\vspace{1ex}
\begin{small}
\begin{equation}\label{double-commutative-diagram}
\entrymodifiers={+!!<0pt,\fontdimen22\textfont2>}
\xymatrixcolsep{1ex}
\xymatrixrowsep{7ex}
\xymatrix{
&&
B\ot M\ot B 
\ar[drr]^{\ \tau^{}_{B,M}\ot 1}
&&
\\
B\ot B\ot M
\ar[urr]^{\ 1\ot \tau^{}_{B,M}\ \ }
\ar[dr]_{\ m^{}_B\ot 1\ \ }
&&&&  
M \ot B \ot B
\ar[dl]^{\ \ 1\ot m^{}_B}
\\ 
&B\ot M
\ar[rr]_{\tau^{}_{B,M}} 
&&
M\ot B
&
\\
B\ot A\ot M \ot A
\ar[dr]_{\tau\ot 1\ot 1\ \ \ }
\ar[ur]^{1\ot\rho^{}_{A,M}\ \ \ }
&& &&
A\ot M\ot A\ot B
\ar[ul]_{\ \ \ \rho^{}_{A,M}\ot 1}
\\
&
A\ot B\ot M\ot A
\ar[rr]_{\ 1\ot\, \tau^{}_{B,M}\ot 1\ }
&&
A\ot M\ot B\ot A
\ar[ur]_{\ \ \ 1\ot 1\ot \tau}
&
.
}
\end{equation}
\end{small}
A similar diagram expresses compatibility of a $B$-bimodule $N$ with $\tau$.}
\end{remark}

\vspace{2ex}

\begin{example}{\em 
Let $M=A$, an $A$-bimodule via multiplication.
Then $A$ is compatible with~$\tau$ via   $\tau_{B,A}=\tau$.
Similarly $N=B$ is compatible with $\tau$.
} 
\end{example}

\vspace{1ex}

\subsection*{Bimodule structure}
When $M$ and $N$ are compatible with $\tau$, 
the tensor product $M\ot N$ is naturally an
$\ttp$-bimodule via the following composition of maps:
\begin{equation}\label{eqn:bimod-str}
\begin{split}
\begin{xy}*!C\xybox{
\xymatrix{
A\! \ot_{\tau} \! B\! \ot\!  M\! \ot \! N\!\ot \! A \! \ot_{\tau} \! B 
\ar[rrr]^{1\ot\, \tau_{B,M}\ot\, \tau_{N,A}\ot 1}
&&& A\! \ot \! M\! \ot \! B\! \ot \! A\! \ot \! N\! \ot\! B && \\
 {\hspace{3.5cm}}\ar[rrr]^{1\ot 1\ot\, \tau\, \ot 1\ot 1 }
&&& A \! \ot \! M\! \ot\! A\! \ot \! B\! \ot\!  N\! \ot\! B 
\ar[rr]^{\hspace{1.25cm}\rho^{}_{A,M}\ot\, \rho^{}_{B,N}}
&& M\ot N \, .
}}
\end{xy}
\end{split}
\end{equation}

\vspace{2ex}

\subsection*{Bimodule resolutions}
For any $k$-algebra $A$, let $A^e=A\ot A^{op}$
be its enveloping algebra,
with $A^{op}$ the opposite algebra to $A$.
We view an $A$-bimodule $M$ as a left $A^e$-module. 
In Lemma~\ref{lem:construction} below,  we
will construct 
a projective  resolution of an $\ttpp ^e $-module $M\ot N$
from individual resolutions of $M$ and $N$
using some compatibility conditions.
Let $P_{\DOT}(M)$ 
be an $A^e$-projective resolution of $M$
and let $P_{\DOT}(N)$ be a $B^e$-projective resolution of $N$:
\begin{align}
\label{eqn:Ares}  \cdots\rightarrow P_2(M)\rightarrow P_1(M) \rightarrow P_0(M)
\rightarrow M\rightarrow 0&, \\
 \label{eqn:Bres}  \cdots\rightarrow P_2(N)\, 
\rightarrow P_1(N)\,\rightarrow P_0(N)\,
\rightarrow N\, \rightarrow 0&.
\end{align}

\vspace{2ex}

\subsection*{Bar and reduced bar resolutions}
For example, $M$ could be $A$ itself and $P_{\DOT}(A)$ could be the {\em bar resolution} 
$\text{Bar}_{\DOT}(A)$ given by
$\text{Bar}_n(A) =A^{\ot (n+2)}$ with differential
$$
  a_0\ot a_1\ot\cdots\ot a_{n+1}\ \mapsto\ 
\sum_{i=0}^n (-1)^i a_0\ot\cdots\ot a_i a_{i+1}\ot\cdots\ot a_{n+1}
$$ 
for all $n\geq 0$ and $a_0,a_1,\ldots, a_{n+1}\in A$. 
We also use the {\em reduced bar resolution} 
$\overline{\text{Bar}}_{\DOT}(A)$ with
$\overline{\text{Bar}}_{n}(A)= A\ot \bar{A}^{\ot n} \ot A$
for $\bar{A}=A/k1_A$
and differential 
given by the same formula.

\vspace{2ex}

\subsection*{Compatibility conditions}
We now define what it means for resolutions 
to be compatible with the twisting map $\tau$.
We tensor arbitrary resolutions (\ref{eqn:Bres}) and (\ref{eqn:Ares}) 
 with $A$ and $B$ on the right and left
to obtain complexes 
$$P_{\DOT}(N)\ot A,\ A\ot P_{\DOT}(N),\
P_{\DOT}(M)\ot B,\ \text{ and } B\ot P_{\DOT}(M)\, .$$
Viewing these simply as exact sequences of vector spaces, 
we note that any $k$-linear maps 
$\tau^{}_{N,A}: N\ot A \rightarrow A\ot N$ and 
$\tau^{}_{B,M}:B\ot M \rightarrow M\ot B$ 
can be lifted to $k$-linear chain maps
\begin{equation}\label{eqn:tauAB}
\tau^{}_{P_{\DOT}(N),A}: P_{\DOT}(N)\ot A \rightarrow
A\ot P_{\DOT}(N) \ \mbox{ and } \ 
\tau^{}_{B,P_{\DOT}(M)}: B\ot P_{\DOT}(M)\rightarrow P_{\DOT}(M)\ot B\, .  
\end{equation}
For simplicity in the sequel, we will write
$\tau^{}_{i,A}=\tau^{}_{P_i(N),A}$
and 
$\tau^{}_{B,i}=\tau^{}_{B,P_i(M)}$,  
for each $i$,
when no confusion will arise. 
We will use such maps to glue the two resolutions together
provided they satisfy the following compatibility conditions. 
These conditions  just state that the chain maps  commute with multiplication
and with  bimodule structure maps.
There are many settings in which compatible chain maps do exist, 
as we will see. 

\vspace{3ex}

\begin{definition}{\em 
Let $M$ be an $A$-bimodule that is compatible with $\tau$. 
A projective $A$-bimodule resolution $P_{\DOT}(M)$ is
{\em compatible with the twisting map $\tau$} if
each $P_i(M)$ is compatible with $\tau$ via a map 
$$
\tau^{}_{B,i}: B\ot P_i(M)\longrightarrow P_i(M)\ot B\, $$ 
with
$\tau^{}_{B,\DOT}$ a chain map lifting $\tau^{}_{B,M}$.
If $A$ is graded, $M$ is a graded $A$-bimodule, and
$P_{\bu}(M)$ is a graded projective $A$-bimodule resolution, we say
that $P_{\bu}(M)$ is {\em compatible with
a strongly graded twisting map} $\tau$
if there are maps $\tau^{}_{B,i}$ as above
taking $B_j\ot (P_i(M))_l$
to $(P_i(M))_l\ot B_j$ for all $j,l$. 
}
\end{definition}

\vspace{1ex}

\begin{remark}{\em
The above definition applies to $B$-bimodule
resolutions as well as $A$-bimodule resolutions
by reversing the role of $A$ and $B$
in the definition, again as $A\ot_\tau B= B \ot_{\tau^{-1}} A$.
For a $B$-bimodule $N$ that is compatible with $\tau$,  
the definition implies
that a projective $B$-bimodule resolution $P_{\DOT}(N)$ of $N$ is
{\em compatible with the twisting map $\tau$} when
each $P_i(N)$ is compatible with $\tau$ via a map 
$\tau^{}_{i,A}:P_i(N)\ot A \rightarrow A\ot P_i(N)$, with
$\tau^{}_{\DOT,A}$ a chain map lifting $\tau^{}_{N,A}$.
Thus we say a resolution is compatible with $\tau$
if it is either an $A$-bimodule resolution 
or a $B$-bimodule resolution compatible with $\tau$.
}
\end{remark}

\vspace{1ex}

We give some small examples later: 
Example~\ref{ex:weylagain} (Weyl algebra)
and Example~\ref{transvection-group} (skew group algebra).
First a remark on embedding resolutions
and some general results. 

\vspace{1ex}

\begin{remark}\label{embeddingresolutions}
{\em
Note that compatibility is preserved under
embedding of resolutions so long as the extensions
of the twisting map $\tau$ preserve the embedding.
Specifically, assume
$$\phi_{\DOT}:Q_{\DOT}(A) \hookrightarrow P_{\DOT}(A)$$
is an embedding of
resolutions of the algebra $A$,
and $P_{\DOT}(A)$ is compatible with a twisting
map $\tau:B\ot A \rightarrow A \ot B$
via chain maps
$$\tau^{}_{B, i}:B\ot P_i(A)\rightarrow P_i(A)\ot B.$$
If the maps $\tau^{}_{B,i}$ 
preserve the
embedding in the obvious sense that 
each $\tau^{}_{B, i}$ restricts to a surjective 
map $B\ot \text{Im}(\phi)
\ontorightarrow \text{Im}(\phi)\ot B$,
then $Q_{\DOT}(A)$ is compatible with $\tau$
via these restrictions.
}
\end{remark}
    
\subsection*{Compatibility of bar and Koszul resolutions}
If $A$ and $B$ are both Koszul algebras 
and $\tau$ is a strongly graded twisting map, then 
the algebra $\ttp$ is known to be Koszul (see~\cite[Example~4.7.3]{PP},
\cite[Corollary~4.1.9]{MLS}, or~\cite[Proposition~1.8]{WW2}).
Conner and Goetz~\cite{ConnerGoetz} examine
the situation when $\tau$ is not strongly graded.
We show next that both bar and Koszul resolutions are compatible with 
twisting maps. We always assume our 
Koszul algebras are connected graded
algebras, 
 so that 
 they are quotients of tensor algebras on generating vector spaces in degree~1.
  Note that the roles of
 $A$ and $B$ may be exchanged in the next proposition. 

\vspace{2ex}

\begin{prop}\label{prop:bar-Koszul}
Let $\tau$ be a twisting map for some $k$-algebras 
$A$ and $B$.
\begin{itemize}
\item[(i)] The bar resolution 
$\text{Bar}_{\DOT}(A)$ is compatible with $\tau$.
\item[(ii)]
The reduced bar resolution $\overline{\text{Bar}}_{\DOT}(A)$
is compatible with $\tau$.
\item[(iii)] If $A$ is a Koszul algebra, $B$
  is a graded algebra,
and $\tau$ is strongly graded, then   
the Koszul resolution $\text{Kos}_{\DOT}(A)$ is compatible
with $\tau$.
\end{itemize}
\end{prop}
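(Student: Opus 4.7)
The plan is to define explicit compatibility maps $\tau^{}_{B,n}$ by iterating the twisting map $\tau$ across tensor factors, verify bimodule compatibility and the chain map property for the bar resolution, and then deduce the reduced bar and Koszul cases.

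For part~(i), on $\text{Bar}_n(A) = A^{\ot (n+2)}$ I will define
$$
\tau^{}_{B,n} \ = \ (1^{\ot(n+1)}\ot\tau)\circ(1^{\ot n}\ot\tau\ot 1)\circ\cdots\circ(\tau\ot 1^{\ot(n+1)}),
$$
which slides the $B$-tensorand from the left end of $B\ot A^{\ot(n+2)}$ to the right end one factor at a time. The $A$-bimodule compatibility conditions~\eqref{compatible2} and~\eqref{compatible3} reduce, on this iterated slide, to applying the corresponding axioms satisfied by $\tau$ itself (the case $M=A$) at each successive pair of factors. To see that $\tau^{}_{B,\DOT}$ is a chain map lifting $\tau^{}_{B,A}=\tau$, the essential local identity is
$$
\tau\circ(1_B\ot m_A) \ = \ (m_A\ot 1_B)\circ(1_A\ot\tau)\circ(\tau\ot 1_A),
$$
obtained from~\eqref{eqn:taumm} by specializing the appropriate $B$-factor to $1_B$. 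Applied at the position where the bar differential multiplies adjacent factors $a_i a_{i+1}$, it yields $(d_n\ot 1_B)\circ\tau^{}_{B,n} = \tau^{}_{B,n-1}\circ(1_B\ot d_n)$. Part~(ii) then follows immediately: because $\tau(b\ot 1_A) = 1_A\ot b$, every slide step involving $1_A$ is trivial on the $B$-component, so $\tau^{}_{B,n}$ descends to a well-defined map on $B\ot A\ot \bar{A}^{\ot n}\ot A$.

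For part~(iii), let $V$ denote the degree-one generators of the Koszul algebra $A$ and $R\subseteq V\ot V$ its relations, so that $\text{Kos}_n(A) = A\ot K_n\ot A$ with $K_n = \bigcap_{i+j=n-2} V^{\ot i}\ot R\ot V^{\ot j}$, and $\text{Kos}_\DOT(A)$ embeds in $\text{Bar}_\DOT(A)$ as a resolution. By Remark~\ref{embeddingresolutions} it suffices to show that the bar-level $\tau^{}_{B,n}$ from~(i) restricts to a surjection $B\ot \text{Kos}_n(A)\ontorightarrow \text{Kos}_n(A)\ot B$. Since $\tau$ is strongly graded, the iterate maps $B\ot V^{\ot n}$ into $V^{\ot n}\ot B$; the essential point is that it further restricts to $B\ot K_n\to K_n\ot B$. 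Applied on $B\ot V\ot V$, the local chain map identity above forces $B\ot R = \ker(1_B\ot m_A)$ into $R\ot B = \ker(m_A\ot 1_B)$; using this at any pair of adjacent $V$-factors shows the iterate sends $B\ot(V^{\ot i}\ot R\ot V^{\ot j})$ into $V^{\ot i}\ot R\ot V^{\ot j}\ot B$ for each $i+j=n-2$, and intersecting over all such pairs gives the restriction to $K_n$. Surjectivity is automatic from invertibility of $\tau$.

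The main obstacle is bookkeeping rather than insight: verifying the chain map property in~(i) requires tracking iterated $\tau$-moves at each position of the bar differential and repeatedly invoking the local identity above, while~(iii) demands care in seeing that the same strongly graded iterate respects every summand of the intersection defining $K_n$.
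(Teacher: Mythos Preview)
Your proposal is correct and follows essentially the same approach as the paper: the same iterated $\tau$ defines $\tau^{}_{B,n}$ on the bar complex, the reduced case is handled by noting that $1_A$ passes trivially through $\tau$, and the Koszul case uses Remark~\ref{embeddingresolutions} to restrict along the embedding into the bar resolution. The only cosmetic difference is that for~(iii) the paper simply cites~\cite[Proposition~1.8]{WW2} for preservation of the embedding, whereas you unpack the argument explicitly via $K_n=\bigcap V^{\ot i}\ot R\ot V^{\ot j}$ and the identity $\tau\circ(1_B\ot m_A)=(m_A\ot 1_B)(1_A\ot\tau)(\tau\ot 1_A)$ restricted to $B\ot V\ot V$; this is exactly the content of the cited result.
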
 
\begin{proof}
(i) The bar resolution of $A$ 
may be twisted by repeated application of the map $\tau$, 
e.g., define
$\tau^{}_{B,i}: B\ot A^{\ot (i+2)} \rightarrow A^{\ot (i+2)}\ot B$ 
by 
applying $\tau$ to the first two tensor
factors on the left, then applying $\tau$ to next
two tensor factors, and so on:
$$\tau^{}_{B,i}
=(1\ot\cdots\ot 1\ot\tau) (1\ot\cdots\ot 1\ot \tau\ot 1)
\cdots(1\ot\tau\ot1\ot\cdots\ot 1) (\tau\ot 1\ot\cdots\ot 1).
$$
Then $\text{Bar}_{\DOT}(A)$ is compatible with
$\tau$ via $\tau^{}_{B,i}$, as may be verified directly by
repeated use of equation~(\ref{eqn:taumm}).

(ii) 
Write the terms in the bar complex $\text{Bar}_{\DOT}(A)$ as
$P_{i}=A^{\ot (i+2)}$ for each $i$, and 
define the terms in the reduced bar complex 
$\overline{\text{Bar}}_{\DOT}(A)$
by $\bar{P}_i = A\ot \bar{A}^{\ot i}\ot A$.
For each $i$, let $T_i$ be the kernel of the quotient
map $\text{Bar}_i(A)\rightarrow \overline{\text{Bar}}_i(A)$.
Then $T_{\DOT}$ is a subcomplex of $\text{Bar}_{\DOT}(A)$
and $\overline{\text{Bar}}_{\DOT}(A) \cong \text{Bar}_{\DOT}(A)/T_{\DOT}$.
By definition of twisting map $\tau$,
the multiplicative identity $1_A$ commutes with elements of $B$
under $\tau$, implying that $\tau_{B,i}$ of part (i)
takes $B\ot T_i$ onto $T_i\ot B$ for each $i$.
Let $\bar{\tau}_{B,i}: B\ot \overline{\text{Bar}}_i(A)
\rightarrow \overline{\text{Bar}}_i(A)\ot B$ be the 
corresponding map on quotients.
Then $\overline{\text{Bar}}_{\DOT}(A)$ is compatible with $\tau$
via the maps $\bar{\tau}_{B,i}$.

(iii) 
The proof of~\cite[Proposition~1.8]{WW2} shows  
that the embedding 
$\text{Kos}_{\DOT}(A) \hookrightarrow \text{Bar}_{\DOT}(A)$
of bimodule resolutions
is preserved by the iterated twisting
in part~(i) above
(see Remark~\ref{embeddingresolutions}).
Thus $\text{Kos}_{\DOT}(A)$ satisfies compatibility.
\end{proof}

\vspace{2ex}

We give an example next showing how
Proposition~\ref{prop:bar-Koszul}
can be used for Koszul resolutions
even when the twisting map
$\tau$ is not strongly graded.

\begin{example}\label{ex:weylagain}{\em
As in Example~\ref{ex:weyl}, 
let $\mathcal W$ be the Weyl algebra 
on $x,y$ with $A=k[x]$ and  $B=k[y]$. 
Let $\text{Kos}_{\bu}(A)$ be the Koszul resolution of $A$ as an $A$-bimodule,
$$
   0\rightarrow A\ot V\ot A \stackrel{d_1}{\longrightarrow}
    A\ot A \stackrel{m}{\longrightarrow} A \rightarrow 0 \, ,
    $$
    where $V=\Span_k\{x\} \subset A$,
$d_1(1\ot x\ot 1)=x\ot 1 -1\ot x$, and $m$ is
multiplication.
Let $\bar{\tau}:B \ot V
\rightarrow V \ot B$
be the swap map
$b\ot v \mapsto v \ot b$ for all $b$ in $B$ and $v$ in $V$,
and define 
$$\bar{\tau}^{}_{B,\DOT} :  B\ot \text{Kos}_{\DOT}(A)
\rightarrow \text{Kos}_{\DOT}(A)\ot B$$ 
by iterations of 
$\tau$ and $\bar{\tau}$: 
$$
\begin{aligned}
  &{\bar{\tau}^{}_{B,0}:    B\ot A \ot A}&
  & \overset{\  \tau\ot 1  \ }{\xrightarrow{\hspace*{1.3cm}}} &
  & {A \ot B \ot A} &
  & \overset{\  1 \ot \tau  \ }{\xrightarrow{\hspace*{1.3cm}}} &
  & {A \ot A \ot B} \, ,  \ 
  \quad\text{and}
  \\
  &\bar{\tau}^{}_{B,1}: B\ot A\ot V\ot A &
&\overset{\  \tau\ot 1\ot 1  \ }{\xrightarrow{\hspace*{1.3cm}}}&
& A\ot B\ot V \ot A& 
& 
\\
&&  
&\overset{\ 1\ot\bar{\tau}\ot 1  \ }{\xrightarrow{\hspace*{1.3cm}}}& 
&A\ot V \ot B\ot A& 
&\overset{\ 1\ot1\ot \tau  \ }{\xrightarrow{\hspace*{1.3cm}}}& 
&A\ot V \ot A\ot B \, .
\end{aligned}
$$
Define $\bar{\tau}^{}_{\bu, A} : 
\text{Kos}_{\bu}(B)\ot A\rightarrow A\ot \text{Kos}_{\bu}(B)$ similarly for 
the Koszul resolution $\text{Kos}_{\bu}(B)$ of $B$.
Note that $\tau$ is not strongly graded,
so part (iii) of
Proposition~\ref{prop:bar-Koszul} does not apply even though
both $A$ and $B$ are Koszul algebras.
Instead, we appeal to part~(ii)
and Remark~\ref{embeddingresolutions} after
taking canonical embeddings
$\text{Kos}_{\DOT}(A) \hookrightarrow
\overline{\text{Bar}}_{\DOT}(A)$ 
and $\text{Kos}_{\DOT}(B)\hookrightarrow
\overline{\text{Bar}}_{\DOT}(B)$.
(For example, view  $A\ot V \ot A$ as a subspace of $A\ot \bar{A}\ot A$; the  terms in other degrees are either 0 or the same as in the bar resolution.)
The maps $\bar{\tau}^{}_{B,\DOT}$
and $\bar{\tau}^{}_{\DOT, A}$  above
are the restrictions to $B\ot\text{Kos}_{\DOT}(A)$ and 
$\text{Kos}_{\DOT}(B)\ot A$
of the maps of the same name
in the proof of 
Proposition~\ref{prop:bar-Koszul}(ii)
(after identifying
$V$ with its image under the quotient
map $A \rightarrow \bar{A}$).
In this way, we see that the Koszul resolutions
$\text{Kos}_{\DOT}(A)$ and 
$\text{Kos}_{\DOT}(B)$ are compatible with the twisting map $\tau$
via $\bar{\tau}^{}_{B,\DOT} $
and
$\bar{\tau}^{}_{\DOT,A}$.
We extend these ideas in Theorem~\ref{simpleOre}.
}\end{example}

\vspace{1ex}


\section{Twisted
product resolutions for Bimodules}\label{sec:construction}
Again, we fix $k$-algebras $A$ and $B$ 
with a twisting map $\tau: B\ot A\rightarrow A\ot B$ 
and consider an $A$-bimodule $M$ and $B$-bimodule $N$.
We build a projective  resolution of $M\ot N$ as
a bimodule over $\ttp$ from resolutions
$P_{\DOT}(M)$ and  $P_{\DOT}(N)$  under our compatibility assumptions. 
We give the construction in Lemma~\ref{lem:construction}, prove exactness
in Lemma~\ref{lem:exact}, and 
show
in Lemma~\ref{lem:proj} that the modules in the construction 
are indeed projective under an additional assumption. 

\vspace{1ex}

\begin{lemma}\label{lem:construction}
Let $M$ be an $A$-bimodule and let $N$ be a $B$-bimodule, both compatible
with a twisting map $\tau$. 
Let $P_{\DOT}(M)$ and $P_{\DOT}(N)$
be projective $A$- and $B$-bimodule resolutions of $M$ and $N$, respectively, 
that are compatible with  $\tau$.
For each $i,j\geq 0$, let 
\begin{equation}\label{xij}
 X_{i,j}= P_i(M)\ot P_j(N) \, ,  
\end{equation}
an $\ttp$-bimodule via diagram~(\ref{eqn:bimod-str}). 
Then $X_{\DOT, \DOT}$ is a 
bicomplex of $\ttp$-bimodules with horizontal and vertical differentials
given by 
$  d^h_{i,j}= d_i\ot 1$ and 
$d^v_{i,j}= (-1)^i\ot d_j$,
where $d_i$ and $d_j$ denote the differentials of the appropriate resolutions:
\end{lemma}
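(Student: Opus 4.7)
The plan is to verify the three ingredients implicit in the statement: (i) each $X_{i,j}$ carries an $\ttp$-bimodule structure via diagram~(\ref{eqn:bimod-str}), (ii) the maps $d^h_{i,j}$ and $d^v_{i,j}$ are $\ttp$-bimodule morphisms, and (iii) the three bicomplex identities $d^h d^h=0$, $d^v d^v=0$, and $d^h d^v+d^v d^h=0$ all hold.

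For~(i), I would observe that since the resolutions $P_{\bu}(M)$ and $P_{\bu}(N)$ are compatible with $\tau$, each bimodule $P_i(M)$ is an $A$-bimodule compatible with $\tau$ via $\tau^{}_{B,i}$, and each $P_j(N)$ is a $B$-bimodule compatible with $\tau^{-1}$ via $\tau^{}_{j,A}$. Consequently diagram~(\ref{eqn:bimod-str}), with $M$ and $N$ replaced by $P_i(M)$ and $P_j(N)$, defines an $\ttp$-bimodule structure on $X_{i,j}$. Checking that this is a genuine bimodule action (that is, associative with commuting left and right actions) is the same diagram chase as for the base case $X_{0,0}=M\ot N$ already discussed before the lemma: one repeatedly invokes the hexagon relation~(\ref{eqn:taumm}) for $\tau$ together with the compatibility conditions~(\ref{compatible2})--(\ref{compatible4}) now applied to $P_i(M)$ and $P_j(N)$.

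For~(ii), I would show that $d^h_{i,j}=d_i\ot 1$ commutes with both the left and right $\ttp$-actions on $X_{\bu,j}$. Unwinding~(\ref{eqn:bimod-str}), the left action on $X_{i,j}$ is the composite of $1\ot \tau^{}_{B,i}\ot \tau^{}_{j,A}\ot 1$, then $1\ot 1\ot \tau\ot 1\ot 1$, then $\rho^{}_{A,P_i(M)}\ot \rho^{}_{B,P_j(N)}$. Since $d^h_{i,j}$ only affects the $P_i(M)$ tensor factor, comparing $d^h_{i,j}$ applied after the action with the action applied after $d^h_{i,j}$ reduces to two inputs: that $d_i$ is an $A$-bimodule map (given by the resolution), and that $\tau^{}_{B,\bu}$ is a chain map lifting $\tau^{}_{B,M}$ (from compatibility). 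The right action, and the entirely parallel assertion for $d^v_{i,j}$ using $\tau^{}_{\bu,A}$, are handled symmetrically.

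For~(iii), the identities $d^h d^h=(d_{i-1}d_i)\ot 1=0$ and $d^v d^v=1\ot(d_{j-1}d_j)=0$ are immediate from $P_\bu(M)$ and $P_\bu(N)$ being complexes. Anticommutativity of the square is forced by the sign in $d^v_{i,j}=(-1)^i(1\ot d_j)$: going down-then-right produces $(-1)^{i-1}(d_i\ot d_j)$ while going right-then-down produces $(-1)^i(d_i\ot d_j)$, and the two cancel. The main obstacle in the whole argument is step~(i), where the three compatibility relations must be combined to confirm that the prescribed action is an honest bimodule structure; once that is in place, steps~(ii) and~(iii) are formal consequences of the chain-map property of $\tau^{}_{B,\bu}$ and $\tau^{}_{\bu,A}$ and of the differentials being bimodule maps.
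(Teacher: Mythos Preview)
Your proposal is correct and follows essentially the same approach as the paper: verify the tensor product bicomplex identities at the level of $k$-vector spaces, and then use that $\tau^{}_{B,\bu}$ and $\tau^{}_{\bu,A}$ are chain maps to conclude the differentials are $\ttp$-bimodule maps. The paper's proof is considerably terser---it takes the bimodule structure on $X_{i,j}$ as already established by the discussion surrounding diagram~(\ref{eqn:bimod-str}) and dispatches the rest in two sentences---so your explicit breakdown into (i)--(iii), and your identification of~(i) as the place where the compatibility hypotheses do the work, is more detailed but not substantively different.
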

\begin{small}
$$
\entrymodifiers={+!!<0pt,\fontdimen22\textfont2>}
\xymatrixcolsep{6ex}
\xymatrixrowsep{4ex}
\xymatrix{
\vdots \ar[d] & &
\vdots \ar[d] & &
\vdots \ar[d] \\
X_{0, 2} \ar[d]^{d^v_{0,2}} & & 
X_{1, 2} \ar[ll]_{d^h_{1, 2}} \ar[d]^{d^v_{1,2}}  & &
X_{2, 2} \ar[ll]_{d^h_{2, 2}}  \ar[d]^{d^v_{2,2}}  & 
{\cdots} 
\ar[l]  \\ 
{X_{0, 1}} \ar[d]^{d^v_{0, 1}} & &
X_{1, 1} \ar[ll]_{d^h_{1,1}}   \ar[d]^{d^v_{1, 1}} & &
X_{2, 1} \ar[ll]_{d^h_{2,1}} \ar[d]^{d^v_{2, 1}} & 
{\cdots} 
\ar[l]
\rule{0ex}{6ex} 
\\ 
X_{0, 0}  & &
X_{1, 0} \ar[ll]_{d^h_{1,0}} & & 
X_{2, 0} \ar[ll]_{d^h_{2,0}} & 
\cdots\ar[l] 
}
$$
\end{small}

\begin{proof}
The $k$-vector spaces $X_{i,j}$ form a tensor product bicomplex with
differentials as stated. 
The bimodule action of $\ttp$ on $X_{i,j}$ 
commutes with the horizontal and vertical differentials
since $\tau^{}_{\DOT,B}$ and $\tau^{}_{A,\DOT}$ are chain maps.
Therefore this is an $\ttp$-bimodule bicomplex. 
\end{proof}

\vspace{2ex}

\begin{definition}\label{def:tpr} {\em
The {\em twisted product complex} $X_{\DOT}$ 
is the total complex of $X_{\DOT, \DOT}$, i.e., when augmented by $M\ot N$, it is the complex 
\begin{equation}\label{resolution-X}
  \cdots\rightarrow X_2\rightarrow X_1\rightarrow X_0\rightarrow M\ot N\rightarrow 0
\end{equation}
 with $X_n = \oplus_{i+j=n} X_{i,j}$,  
and $n$th differential 
$\sum_{i+j=n} d_{i,j}$ where 
$d_{i,j}= d_i\ot 1 + (-1)^i \ot d_j$.
}\end{definition}

\begin{lemma}\label{lem:exact}
The twisted product complex~(\ref{resolution-X})
is exact.
\end{lemma}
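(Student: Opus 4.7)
The plan is to prove exactness of the total complex $X_{\bullet}$ by the standard double-complex argument that exploits the fact that $\otimes = \otimes_k$ is over a field, hence preserves exact sequences.

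First I would augment the bicomplex $X_{\bullet,\bullet}$ on two sides: add a column $X_{-1,\bullet}$ consisting of $M \otimes P_j(N)$ for $j \geq 0$ with the horizontal augmentation induced by the quotient $P_0(M) \twoheadrightarrow M$, and similarly add a row $X_{\bullet,-1}$ consisting of $P_i(M) \otimes N$ with vertical augmentation induced by $P_0(N) \twoheadrightarrow N$, and place $M \otimes N$ in the corner $X_{-1,-1}$. This augmented bicomplex is still an $\ttp$-bimodule bicomplex by the same argument as in Lemma~\ref{lem:construction}, since the augmentation maps are compatible with the twistings $\tau_{B,\bullet}$ and $\tau_{\bullet,A}$ in degree zero.

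Next I would observe that each row of the augmented bicomplex is exact. Indeed, for fixed $j$ the $j$th row is $P_{\bullet}(M) \otimes P_j(N)$ augmented by $M \otimes P_j(N)$; since $P_{\bullet}(M) \to M \to 0$ is exact and $P_j(N)$ is a $k$-vector space, tensoring over the field $k$ preserves exactness. An identical argument, with the roles of $A$ and $B$ interchanged, shows that every column is exact.

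Finally I would apply the acyclic assembly lemma (equivalently, a spectral sequence argument on the filtration by columns): since every column of the augmented bicomplex is acyclic, the total complex of the augmented bicomplex is acyclic, and therefore the total complex of $X_{\bullet,\bullet}$ is quasi-isomorphic to its augmentation $M \otimes N$ concentrated in degree zero. This is precisely the claim that~(\ref{resolution-X}) is exact. The only real point to verify is compatibility of the augmentation with the bimodule structure, and this is immediate from the definitions, so I do not anticipate a significant obstacle; the argument is essentially the same one that shows the tensor product of two projective resolutions over a field resolves the tensor product of the modules, with the bimodule structure on $X_{i,j}$ coming from~(\ref{eqn:bimod-str}) playing no role in the exactness calculation.
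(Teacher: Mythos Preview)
Your approach is correct and closely parallel to the paper's. The paper simply invokes the K\"unneth Theorem over the field $k$: the $\Tor$ term vanishes, so $\coh_n\big(P_{\bullet}(M)\otimes P_{\bullet}(N)\big)\cong\bigoplus_{i+j=n}\coh_i\big(P_{\bullet}(M)\big)\otimes \coh_j\big(P_{\bullet}(N)\big)$, which is $M\otimes N$ for $n=0$ and zero otherwise. Your acyclic-assembly argument is essentially a direct proof of this special case of K\"unneth, resting on the same fact that $\otimes_k$ is exact; as you correctly observe, the $\ttp$-bimodule structure plays no role in the exactness calculation.

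One small wrinkle in your write-up: after doubly augmenting and deducing (correctly) that the total complex of the doubly-augmented bicomplex is acyclic, the step ``therefore $X_{\bullet}$ is quasi-isomorphic to $M\otimes N$'' does not follow immediately, because that total complex in degree $n\geq 0$ contains not only $X_n$ but also the border terms $M\otimes P_{n+1}(N)$ and $P_{n+1}(M)\otimes N$. The clean fix is to augment on only one side---say, add the column $X_{-1,\bullet}=M\otimes P_{\bullet}(N)$---use row-exactness to get acyclicity of \emph{that} singly-augmented total complex, and read this as a quasi-isomorphism from $X_{\bullet}$ to $M\otimes P_{\bullet}(N)$; the latter then has homology $M\otimes N$ in degree~$0$ by a second application of exactness of $\otimes_k$. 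Equivalently, run the column-filtration spectral sequence directly on the unaugmented $X_{\bullet,\bullet}$, as your parenthetical already hints.
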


\begin{proof}
By the K\"unneth Theorem~\cite[Theorem~3.6.3]{W},
for each $n$ there is  an exact sequence
$$
\begin{aligned}
0
\longrightarrow \bigoplus_{i+j=n} 
\coh_i\big(P_{\DOT}(M)\big)
 \ot 
  \coh_j\big(P_{\DOT}(N)\big)
 &\longrightarrow 
 \coh_n\big(P_{\DOT}(M)
 \ot P_{\DOT}(N)\big)\\
&\longrightarrow \bigoplus_{i+j=n-1} 
\Tor^{k}_1\Big(\coh_i\big(P_{\DOT}(M)\big),
  \coh_j \big(P_{\DOT}(N)\big)\Big)
\longrightarrow 0\, .
\end{aligned}
$$
Now $P_{\DOT}(M)$ and $P_{\DOT}(N)$ are exact other than in
degree 0, where they have homology $M$ and $N$, respectively. 
Therefore  
$$\coh_i\big(P_{\DOT}(M)\big)=0 \text{ for all } i>0 \ \text{ and } \   
  \coh_j\big(P_{\DOT}(N))=0 \text{ for all } j>0\, .
$$
The Tor term is 0 since $k$ is a field.
Thus for all $n>0$, 
$
\coh_n 
\big(P_{\DOT}(M)\ot P_{\DOT}(N) \big)
=0 , 
$
and
$$
\coh_0\big(P_{\DOT}(M) \ot P_{\DOT}(N) \big)
\cong 
\coh_0\big(P_{\DOT}(M)\big)
 \ot 
\coh_0\big(P_{\DOT}(N)\big)
\cong M\ot N \, 
$$
as vector spaces.
Thus the complex~(\ref{resolution-X}) is exact. 
\end{proof}

In practice, one often can show directly that
each $X_{i,j}$ is projective as an $\ttp$-bimodule,
for example, when working with bar resolutions
and/or Koszul resolutions.
For the general case, we need an extra compatibility assumption, which we explain next.
As each $P_i(N)$ is a projective $B$-bimodule, it  embeds into a free $B^e$-module
$(B^e)^{\oplus J}$ for some indexing set $J$. 
In the following definition, we use the map 
$(\tau\ot 1)(1\ot \tau):
B^e \ot A \rightarrow A \otimes B^e $. 

\begin{definition}{\em 
\label{compatible-with-embedding}
A chain map $\tau^{}_{i,A}: P_{i}(N)\ot A \rightarrow
A\ot P_{i}(N) $
is {\em compatible with a chosen
embedding} 
${P_i(N)} \hookrightarrow
(B^e)^{\oplus J}$
(for some indexing set $J$) if the corresponding diagram is commutative: 
\begin{small}
$$
\entrymodifiers={+!!<0pt,\fontdimen22\textfont2>}
\xymatrixcolsep{6ex}
\xymatrixrowsep{7ex}
\xymatrix{
{P_i(N)}\otimes A\lhook\mkern-7mu \ar[rr] 
\ar[d]_{\tau^{}_{i,A}}
& & 
(B^e)^{\oplus J} \otimes A
\ar[d]^{((\tau\ot 1)(1\ot \tau))^{\oplus J}} 
\\ 
A\otimes {P_i(N)}\lhook\mkern-7mu \ar[rr] 
& & 
A\otimes (B^e)^{\oplus J}\, .
}
$$
\end{small}
Similarly, the map 
$\tau^{}_{B,i}$ of~(\ref{eqn:tauAB}) is 
{\em compatible with a chosen embedding} of $P_i(M)$ into a free
$A^e$-module $(A^e)^{\oplus I}$ (for some indexing set $I$) 
if the corresponding diagram is commutative, i.e., 
if $\tau^{}_{B,i}$ is the restriction of the map $((1\ot\tau)(\tau\ot 1))^{\oplus I}$ to 
$B\ot P_i(M)$.
}  
\end{definition}

\vspace{1ex}

\begin{remark}\label{rem:not}
{\em 
In many settings, one sees directly that each $X_{i,j}$ is
projective, in which case one need not consider this extra compatibility
condition, as the next lemma is not needed.
This is the case, for 
example, when twisting by
a bicharacter on grading groups
(see~\cite[Lemma~3.3]{BerghOppermann08}). 
In other settings, $\tau^{}_{i,A}$ and $\tau^{}_{B,i}$ are automatically compatible
with chosen
embeddings into free modules, for example if $A$ and $B$ are Koszul algebras
and the embeddings are standard embeddings into bar resolutions
(see~\cite[Proposition~1.8]{WW2}). 
}\end{remark}

\vspace{2ex}

\begin{example}{\em
As in Examples~\ref{ex:weyl} and~\ref{ex:weylagain}, 
let $\mathcal W \cong \ttp$ be the Weyl algebra 
on $x,y$, $A=k[x]$, and $B=k[y]$. 
By construction, each map $\bar{\tau}^{}_{i,A}$
is compatible with the canonical embedding
$\text{Kos}_i(A)\hookrightarrow \overline{\text{Bar}}_i(A)$
and  likewise
${\bar{\tau}}^{}_{B,i}$ is compatible with 
$\text{Kos}_i(B)\hookrightarrow \overline{\text{Bar}}_i(B)$.
}\end{example}

\vspace{2ex}

\begin{lemma}\label{lem:proj}
If $\tau^{}_{B,i}$ and $\tau^{}_{j,A}$ are compatible with chosen embeddings
of $P_i(M)$ and $P_j(N)$ into free modules, then $X_{i,j}=P_i(M)\ot P_j(N)$ is a 
projective $\ttp$-bimodule.
\end{lemma}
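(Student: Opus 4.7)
The plan is to realize $X_{i,j}$ as a direct summand of a free $\ttp$-bimodule. The first step is to use the chosen embeddings to present $P_i(M)$ as a direct summand of the free $A^e$-module $(A^e)^{\oplus I}$ and $P_j(N)$ as a direct summand of the free $B^e$-module $(B^e)^{\oplus J}$, so that
$$X_{i,j} = P_i(M)\ot P_j(N) \;\hookrightarrow\; (A^e)^{\oplus I}\ot (B^e)^{\oplus J} \;\cong\; (A^e\ot B^e)^{\oplus(I\times J)}$$
is at least a $k$-linear direct summand. By Definition~\ref{compatible-with-embedding}, the maps $\tau^{}_{B,i}$ and $\tau^{}_{j,A}$ are the restrictions of the iterated twistings $((1\ot\tau)(\tau\ot 1))^{\oplus I}$ and $((\tau\ot 1)(1\ot\tau))^{\oplus J}$ on these ambient free modules. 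Tracing through diagram~(\ref{eqn:bimod-str}), this identifies the $\ttp$-bimodule structure on $X_{i,j}$ with the restriction of the corresponding structure on $(A^e\ot B^e)^{\oplus(I\times J)}$, so both the embedding and its $k$-linear splitting upgrade to $\ttp$-bimodule maps. It therefore suffices to prove that $A^e\ot B^e$, equipped with the bimodule structure from~(\ref{eqn:bimod-str}) built using the natural twists $\tau^{}_{B,A^e}=(1\ot\tau)(\tau\ot 1)$ and $\tau^{}_{B^e,A}=(\tau\ot 1)(1\ot\tau)$, is itself a free $\ttp$-bimodule.

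For this second step, I would exhibit the explicit bijection
$$\phi := 1\ot \tau \ot 1 \colon \ttpp^e = A\ot B\ot A\ot B \;\longrightarrow\; A\ot A\ot B\ot B = A^e\ot B^e,$$
well-defined since $\tau$ is bijective, and claim that it is an $\ttp$-bimodule isomorphism from the free rank-one bimodule $\ttpp^e$ onto $A^e\ot B^e$. Once this is established, $X_{i,j}$ is a direct summand of a free $\ttp$-bimodule and therefore projective.

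The main obstacle is verifying that $\phi$ intertwines the two $\ttp$-bimodule actions. I would expand both sides of this identity using, on one hand, the multiplication rule $m_\tau = (m_A\ot m_B)(1\ot\tau\ot 1)$ for $\ttp$ acting on $\ttpp^e$ and, on the other hand, diagram~(\ref{eqn:bimod-str}) with $\tau^{}_{B,A^e}$ and $\tau^{}_{B^e,A}$ as defined above. After routine rearrangements of tensor factors, the check should reduce to repeated applications of the twisting axiom~(\ref{eqn:taumm}), which is precisely the relation that moves a $\tau$ past a product $m_B\ot m_A$; the verification for the right action is symmetric to that for the left.
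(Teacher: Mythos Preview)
Your strategy matches the paper's: first show that $1\ot\tau\ot 1\colon(\ttp)^e\to A^e\ot B^e$ is an isomorphism of $(\ttp)^e$-modules (so the target of the embedding is free), and then use Definition~\ref{compatible-with-embedding} to see that the tensor product of the chosen embeddings $P_i(M)\ot P_j(N)\hookrightarrow(A^e)^{\oplus I}\ot(B^e)^{\oplus J}$ is a map of $(\ttp)^e$-modules.

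There is one place where your write-up asserts more than the hypotheses provide. You claim that the $A^e$- and $B^e$-linear splittings of these embeddings ``upgrade to $\ttp$-bimodule maps.'' But Definition~\ref{compatible-with-embedding} only says that $\tau_{B,i}$ and $\tau_{j,A}$ are the \emph{restrictions} of the iterated twists on the ambient free modules; it does not say the ambient twist preserves the chosen complementary summand. Concretely, from $(\iota\ot 1)\circ\tau_{B,i}=T\circ(1\ot\iota)$ one gets $\tau_{B,i}\circ(1\ot\pi)=(\pi\ot 1)\circ T\circ(1\ot\iota\pi)$, which equals $(\pi\ot 1)\circ T$ only if $T$ carries $B\ot\ker\pi$ into $\ker\pi\ot B$, and nothing in the definition forces that. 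The paper's own proof is terse at exactly this juncture---it records only that the embedding is an $(\ttp)^e$-module map and stops---so your outline is at the same level of detail as the original, but the explicit claim about the splitting, as stated, is not justified by the compatibility hypothesis alone.
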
 

\begin{proof}
First we verify the lemma in  case  $P_i(M) = A^e$, $P_j(N) = B^e$,
and the chosen embeddings are the identity maps.
In this case, 
$X_{i,j}= A^e\ot B^e =A\ot A^{op}\ot B\ot B^{op}$.
One checks that the map 
$$
  1\ot \tau\ot 1 : \ A\ot B\ot (A\ot B)^{op}
   \longrightarrow A\ot A^{op}\ot B\ot B^{op}
$$
is an isomorphism of $\ttpp^e$-modules by equation~(\ref{eqn:taumm})
and the definition of the action given in the proof of Lemma~\ref{lem:construction}. 
If  $P_i(M)$ and $P_j(N)$ are arbitrary free modules, and the chosen
embeddings are identity maps, we apply the above map to each summand
$A^e\ot B^e$ of $P_i(M)\ot P_j(N)$ to see that $X_{i,j}$ is a free $(\ttp)^e$-module.

Now we consider the general case, including the possibility that at least one of 
$P_i(M)$, $P_j(N)$ is free but the corresponding chosen embedding into a 
(possibly different) free module is not the identity map. 
The first part of the proof together with the compatibility hypothesis
implies that the embedding of $k$-vector spaces $P_i(M)\ot P_j(N) \hookrightarrow
(A^e)^{\oplus I}\ot (B^e)^{\oplus J}$ given by the tensor product of
the two embedding maps is a map of $(\ttp)^e$-modules. 
\end{proof}

We combine the lemmas to obtain the following theorem. 

\begin{thm}
\label{maintheorem1}
Let $A$ and $B$ be $k$-algebras, and let 
$\tau:B\ot A \rightarrow A \ot B$ be a twisting map.
Let $M$ be an $A$-bimodule and $N$ a $B$-bimodule  
with projective $A$- and $B$-bimodule 
resolutions $P_{\DOT}(M)$ and $P_{\DOT}(N)$, respectively. 
Assume that $M$, $N$, $P_{\DOT}(M)$, and $P_{\DOT}(N)$ are compatible with $\tau$
and the corresponding 
maps $\tau^{}_{B,i}$ and $\tau^{}_{j,A}$ are compatible with
chosen embeddings of $P_i(M)$ and $P_i(N)$ into free modules.
Then the twisted product complex 
with
$$X_n = \oplus_{i+j=n}\, X_{i,j}
\quad\quad\text{ for }\quad X_{i,j}=P_i(M)\ot P_j(N)
$$
gives a projective resolution of $M\ot N$ as $\ttp$-bimodule:
\begin{equation*}
  \cdots\rightarrow X_2\rightarrow X_1\rightarrow X_0\rightarrow 
  M\ot N \rightarrow 0\, .
\end{equation*}
\end{thm}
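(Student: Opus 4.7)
The plan is to assemble the three lemmas already established in this section, since each captures one of the three ingredients required (bicomplex structure, exactness of the totalization, and projectivity of the terms).

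First I would invoke Lemma~\ref{lem:construction} to get that $X_{\bu,\bu}$ with $X_{i,j}=P_i(M)\ot P_j(N)$ is a bicomplex of $\ttp$-bimodules under the horizontal and vertical differentials $d^h_{i,j}=d_i\ot 1$ and $d^v_{i,j}=(-1)^i\ot d_j$; this uses the hypothesis that $M$, $N$, $P_{\DOT}(M)$, $P_{\DOT}(N)$ are compatible with $\tau$ so that the bimodule action of $\ttp$ commutes with the differentials. Then I would form the total complex $X_{\DOT}$ as in Definition~\ref{def:tpr}, with $X_n=\oplus_{i+j=n}X_{i,j}$ and differential $\sum_{i+j=n}(d_i\ot 1+(-1)^i\ot d_j)$, augmented by the multiplication map $X_0=P_0(M)\ot P_0(N)\twoheadrightarrow M\ot N$.

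Next I would apply Lemma~\ref{lem:exact} to conclude that the augmented total complex
\[
\cdots\to X_2\to X_1\to X_0\to M\ot N\to 0
\]
is exact, since the K\"unneth argument given there is purely about the chain-complex structure of $P_{\DOT}(M)\ot P_{\DOT}(N)$ and does not require any further hypotheses.

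Finally I would invoke Lemma~\ref{lem:proj}, which is where the compatibility-with-embedding hypothesis on $\tau^{}_{B,i}$ and $\tau^{}_{j,A}$ is used, to deduce that every $X_{i,j}$ is a projective $\ttp$-bimodule; since arbitrary direct sums of projectives are projective, each $X_n=\oplus_{i+j=n}X_{i,j}$ is then projective as an $\ttp$-bimodule. Combining this with the exactness from the previous step yields that $X_{\DOT}$ is a projective bimodule resolution of $M\ot N$ over $\ttp$, which is the conclusion of the theorem. The main ``obstacle'' here is really only bookkeeping — verifying that the direct sums of projectives used to build $X_n$ are $\ttp$-bimodules in the same way that the individual $X_{i,j}$ are, and that the $\ttp$-bimodule action is the one inherited from diagram~(\ref{eqn:bimod-str}) — but this is already guaranteed by Lemma~\ref{lem:construction}, so the theorem follows immediately from the three lemmas.
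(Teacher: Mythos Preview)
Your proposal is correct and matches the paper's own proof essentially verbatim: the paper simply states that the result follows from Lemmas~\ref{lem:construction}, \ref{lem:exact}, and~\ref{lem:proj}. Your additional remarks about direct sums of projectives and the bimodule structure on $X_n$ are sound elaborations of what the paper leaves implicit.
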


\begin{proof}
The result follows 
from Lemmas~\ref{lem:construction}, \ref{lem:exact}, and~\ref{lem:proj}.
\end{proof}

\vspace{1ex}

\begin{remark}{\em
The theorem generally 
unifies known constructions of resolutions in several different
contexts, for example,
twisted tensor products given by bicharacters of grading
groups~\cite{BerghOppermann08},
crossed products~\cite{GG},
skew group algebras (semidirect products) of Koszul algebras
and finite groups~\cite{quad},
and smash products of Koszul algebras with Hopf algebras~\cite{WW1}.
}\end{remark}

Theorem~\ref{maintheorem1} combined with
Proposition~\ref{prop:bar-Koszul} and
Remark~\ref{rem:not} implies that 
a twisted  product resolution of $\ttp$ as a bimodule always exists,
since bar resolutions may always be twisted
(and likewise Koszul resolutions, when one or both of the algebras is 
Koszul,
see also~\cite{MLS,PP,WW2}):

\begin{cor}
Let $A$ and $B$ be $k$-algebras with twisting map $\tau: B\ot A \rightarrow B\ot A$.
The following are projective resolutions of $\ttp$ as a bimodule over itself.
\begin{itemize}
\item
The twisted  product complex of two bar resolutions.
\item
The twisted  product complex of  
two Koszul resolutions
when $A$ and $B$ are Koszul algebras
and $\tau$ is strongly graded.
\item
The twisted  product complex
of one bar resolution and one Koszul resolution
in case one of $A$ or $B$ is Koszul and the other is
graded, for $\tau$ strongly graded. 
\end{itemize}
Moreover, 
bar resolutions may be
replaced by reduced bar resolutions in the above statements. 
\end{cor}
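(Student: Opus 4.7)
The plan is to apply Theorem~\ref{maintheorem1} with $M=A$ and $N=B$, viewed as bimodules over themselves. By the example immediately following Definition~\ref{compatible}, $A$ is compatible with $\tau$ via $\tau^{}_{B,A}=\tau$, and similarly $B$ is compatible with $\tau$ via $\tau^{}_{N,A}=\tau$. The $\ttp$-bimodule structure induced on $A\ot B$ by diagram~(\ref{eqn:bimod-str}) coincides with the regular bimodule structure of $\ttp$ on itself, essentially by definition of $m_\tau$ in~\cite{CSV}, so a projective resolution of $A\ot B$ produced by the theorem is exactly a projective bimodule resolution of $\ttp$.

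For each of the three listed combinations of resolutions, Proposition~\ref{prop:bar-Koszul} (applied separately to $A$ and to $B$, with the roles of the two algebras exchanged where needed) supplies projective bimodule resolutions compatible with $\tau$: parts~(i) and~(ii) handle (reduced) bar resolutions of either factor without any grading assumption, and part~(iii) handles Koszul resolutions when the corresponding algebra is Koszul and $\tau$ is strongly graded. Combining these, each bullet of the corollary yields a pair $P_{\DOT}(A)$, $P_{\DOT}(B)$ satisfying the compatibility hypotheses of Theorem~\ref{maintheorem1}.

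It remains to verify the embedding-compatibility of $\tau^{}_{B,i}$ and $\tau^{}_{j,A}$ in the sense of Definition~\ref{compatible-with-embedding}. For bar resolutions each term $A^{\ot (i+2)}$ is already a free $A^e$-module, so one takes the chosen embedding to be the identity and the diagram in Definition~\ref{compatible-with-embedding} commutes tautologically; this is the situation flagged in Remark~\ref{rem:not}. For Koszul resolutions the iterated twisting from the proof of Proposition~\ref{prop:bar-Koszul}(i) restricts along the canonical embedding $\mathrm{Kos}_{\DOT}\hookrightarrow \overline{\mathrm{Bar}}_{\DOT}$, which is exactly the embedding-compatibility invoked in Remark~\ref{rem:not} via~\cite[Proposition~1.8]{WW2}. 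Hence Theorem~\ref{maintheorem1} applies in all three cases, and the ``moreover'' clause follows by substituting Proposition~\ref{prop:bar-Koszul}(ii) for~(i)---i.e., the reduced bar for the bar resolution---throughout, since the kernel $T_{\DOT}$ of the projection to the reduced bar is preserved by the iterated twist.

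The main obstacle is not a single hard computation but bookkeeping across the three cases: one must track which factor of $X_{i,j}=P_i(A)\ot P_j(B)$ supplies which compatibility datum, and in the mixed bar/Koszul bullet both the freeness argument and the embedding argument must be run simultaneously, each on the correct side. Once Proposition~\ref{prop:bar-Koszul} and Remark~\ref{rem:not} are in hand, however, no fresh calculation is required and the result is formal.
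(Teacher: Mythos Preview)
Your approach is exactly the paper's: invoke Theorem~\ref{maintheorem1} with $M=A$, $N=B$, supply the compatibility via Proposition~\ref{prop:bar-Koszul}, and handle projectivity of the $X_{i,j}$ through Remark~\ref{rem:not}. The structure and the references line up.

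One point deserves correction. You claim that for the bar resolution the identity embedding $A^{\ot(i+2)}\hookrightarrow (A^e)^{\oplus I}$ makes the diagram in Definition~\ref{compatible-with-embedding} commute ``tautologically.'' It does not: the map $((1\ot\tau)(\tau\ot 1))^{\oplus I}$ moves $B$ past only the two outer $A$-factors and leaves the index in $I$ (coming from the middle $A^{\ot i}$) fixed, whereas the iterated twist $\tau^{}_{B,i}$ of Proposition~\ref{prop:bar-Koszul}(i) moves $B$ past \emph{all} $i+2$ factors and so alters the middle ones whenever $\tau$ is nontrivial. The correct reading of Remark~\ref{rem:not} here is its first clause: one checks \emph{directly} that each $X_{i,j}=A^{\ot(i+2)}\ot B^{\ot(j+2)}$ is a free $(\ttp)^e$-module (via an isomorphism built from iterated applications of $\tau$, as in the proof of Lemma~\ref{lem:proj}), so Lemma~\ref{lem:proj} and the embedding hypothesis are simply bypassed. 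Your treatment of the Koszul case via the standard embedding into the bar resolution and \cite[Proposition~1.8]{WW2} is fine; just don't conflate the two mechanisms in Remark~\ref{rem:not}.
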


\vspace{1ex}

\subsection*{Examples: Skew group algebras}
We give some details for a class of examples introduced in
Example~\ref{ex:sga}. 
The resolutions  in~\cite{quad} for 
$S\rtimes G$, where $G$ is a finite group 
acting by graded automorphisms on a Koszul algebra $S$, appear different
from but are equivalent to~(\ref{resolution-X}) when $M = kG$ (the group
algebra) and  $N=S$. Note that $kG\ot S$ is isomorphic to
$S\rtimes G$ as an $(S\rtimes G)$-bimodule via the twisting map $\tau$.
In~\cite{quad}, the modules $X_{i,j}$  are given as 
$$
   (S\rtimes G) \ot C_i'\ot D_j'\ot (S\rtimes G) 
$$
where $P_i(kG) = kG\ot C_i'\ot kG$, $P_j(S)= S\ot D_j'\ot S$ are 
free $(kG)^e$- and $S^e$-modules determined by vector
spaces $C_i'$, $D_j'$,
respectively.
We assume $P_i(kG)$ is $G$-graded and the grading is compatible with the
$kG$-bimodule action. 
We assume  $P_j(S)$ is a $kG$-module in such a way that the
differentials are $kG$-module homomorphisms, and this action is compatible with that
of $S$, so that $P_j(S)$ becomes an $S\rtimes G$-module.
Compatibility with $\tau$ follows from these assumptions.
There is an isomorphism of $S\rtimes G$-bimodules,  
$$
   (kG\ot C_i'\ot kG) \ot (S\ot D_j'\ot S)\stackrel{\sim}{\longrightarrow}
   (S\rtimes G)\ot C_i'\ot D_j'\ot (S\rtimes G)\, ,
$$
similar to that used in the proof of~\cite[Theorem~4.3]{quad}, given by 
$$
   g\ot x\ot g'\ot s\ot y\ot s' \mapsto  g((hg')s)
    \ot x\ot (g'y)\ot g' s'
$$ for all $g,g'\in G$, $s,s'\in S$, $x$ in the $h$-component of $C_i'$,
and $y\in D_j'$.

\begin{example}
\label{transvection-group}
{\em
In particular,~\cite[Example~4.6]{quad} involves a resolution
that is neither a Koszul resolution nor a bar resolution
and yet satisfies compatibility.
In that example, $k$ is  a field of positive
characteristic $p$, 
$S=k[x,y]$, and $G=\langle g\rangle$ is a group of order $p$
acting on $S$ by  $g\cdot x =x$,
$g\cdot y = x + y$. 
The resolution $P_{\DOT}(S)$ is the 
Koszul resolution $\text{Kos}_{\DOT}(S)$ of $S$,
$$
   0\rightarrow S\ot \Wedge^2 V \ot S \rightarrow 
    S\ot\Wedge^1 V\ot S\rightarrow S\ot S \rightarrow S \rightarrow 0 \, ,
$$
where $V=\Span_k\{x,y\}$. 
The resolution
$P_{\DOT}(kG)$ is the  bimodule resolution of $kG$,
\begin{equation}
\label{group-alg-resolution-example}
    \cdots \stackrel{\eta\cdot}{\longrightarrow} kG\ot kG
   \stackrel{\gamma\cdot}{\longrightarrow} kG\ot kG
   \stackrel{\eta\cdot}{\longrightarrow} kG\ot kG
   \stackrel{\gamma\cdot}{\longrightarrow} kG\ot kG
 \stackrel{ m}{\longrightarrow} kG \longrightarrow 0 \, ,
\end{equation}
where $\gamma = g\ot 1 - 1\ot g$, $\eta = g^{p-1}\ot 1 + g^{p-2}\ot g
  + \cdots + 1\ot g^{p-1}$, and $m$
is multiplication.
Compatibility follows from
Proposition~\ref{prop:bar-Koszul}(i)
using Remark~\ref{embeddingresolutions}
after taking the standard embedding
$\text{Kos}_{\DOT}(S)\hookrightarrow
\text{Bar}_{\DOT}(S)$
and embedding~(\ref{group-alg-resolution-example}) into $\text{Bar}_{\DOT}(kG)$ (see, e.g.,~\cite{BA}).
}
\end{example}

\section{Bimodule resolutions of Ore extensions}\label{sec:Ore}

Many algebras of interest are Ore extensions of other algebras. 
We show how to twist bimodule 
resolutions for such extensions in this section.

\vspace{1ex}

\subsection*{Ore extensions as twisted tensor products}
Let $R$ be a $k$-algebra and fix a $k$-algebra automorphism  $\sigma$
of $R$.
Let $\delta:R\rightarrow R$ be a left $\sigma$-derivation of $R$, that is,
\begin{equation}\label{def:deriv}
\delta(rs) = \delta(r) s + \sigma(r) \delta(s)
\quad \text{ for all } r,s \in R\, .
\end{equation}
The {\em Ore extension}
$
R[x; \sigma, \delta]
$
is the algebra with underlying vector space $R[x]$ 
and multiplication determined by that
of $R$ and of $k[x]$ and  the additional Ore relation
$$xr = \sigma(r)x + \delta(r)
\quad\text{ for all }
r\in R \, . $$
An Ore extension 
$R[x; \sigma, \delta]$
is thus isomorphic to a twisted tensor product $\ttp$
where $A=R$, $B=k[x]$,
and the twisting map $\tau:B\ot A \rightarrow A\ot B$
satisfies
$$\tau(x\ot r)=\sigma(r)\ot x + \delta(r) \ot 1 
\quad\text{ for all }
r\in R \, . $$

\vspace{1ex}

\subsection*{Free resolutions for iterated Ore extensions}
We will work with general Ore extensions
in Section~\ref{sec:Ore2}. 
Here for simplicity we restrict to the case
that the automorphism on $R$ is the identity,
$\sigma=1_R$, so the Ore relation
sets commutators $xr-rx$ equal to elements in $R$.  
In this case, the Ore extension is also known as a {\em ring of formal differential operators}.  We consider an 
{\em iterated Ore extension} 
$S = (\cdots ( k[x_1][x_2; 1, \delta_2])\cdots ) [x_t; 1, \delta_t]$,
which we abbreviate as 
$$S= k[x_1,\ldots, x_t; \delta_2,\ldots, \delta_t]\, 
= k\langle x_1,\ldots, x_t\rangle / ( x_j x_i - x_i x_j - \delta_j(x_i): 1\leq i < j \leq t)\
$$
with $S\cong k[x_1, \ldots, x_t]$ as a $k$-vector space.
We assume that $S$ is a filtered algebra 
with $\deg(x_i)=1$ for all~$i$.
Then
each $\delta_j$ is a filtered map, i.e.,
$\delta_j(x_i) \in k\oplus k\text{-span}\{x_1, \ldots, x_{j-1} \}$ for $i<j$.
This setting includes Weyl algebras and universal enveloping algebras of 
supersolvable Lie algebras.

\begin{thm}\label{simpleOre}
Consider an iterated Ore extension
$S=k[x_1,\ldots, x_t; \delta_2,\ldots,\delta_t]$ 
with identity automorphisms $\sigma_i=1$
and filtered derivations $\delta_i$.
There is an iterated
twisted product resolution $K_{\DOT}$ 
that is a free resolution of $S$ as a bimodule over itself:
$$K_n= S\ot \Wedge^n V\ot S$$
for $V=k\text{-span}\{x_1, \ldots, x_t\}$ with
differentials given by
(for $1\leq l_1<\cdots <l_n\leq t$)
$$
\begin{aligned}
d_n(&1\ot x_{l_1}\wedge \cdots\wedge x_{l_n}\ot 1) \\
&= \
   \sum_{1\leq i\leq n} (-1)^{i+1} \big(x_{l_i}\ot x_{l_1}\wedge \cdots\wedge\hat{x}_{l_i}\wedge
    \cdots \wedge x_{l_n}\ot 1 
   - 1\ot x_{l_1}\wedge\cdots\wedge \hat{x}_{l_i}\wedge \cdots \wedge x_{l_n}\ot x_{l_i}\big)\\
  & \quad\ + \sum_{1\leq i<j\leq n} (-1)^{j} \ot x_{l_1}\wedge
   \cdots\wedge x_{l_{i-1}}\wedge
   \bar{\delta}_{l_j}(x_{l_i})
   \wedge x_{l_{i+1}}\wedge\cdots 
   \wedge \hat{x}_{l_j}\wedge \cdots
   \wedge x_{l_n}\ot 1 \, ,
\end{aligned}
$$
where $\bar{\delta}_{l_j}(x_{l_i})$ is the image of
${\delta_{l_j}(x_{l_i})}$
under the projection $k\oplus V\ontorightarrow V$.

\end{thm}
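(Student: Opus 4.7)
The plan is to proceed by induction on $t$, iteratively applying Theorem~\ref{maintheorem1} with $A = S_{j-1} := k[x_1,\ldots,x_{j-1};\delta_2,\ldots,\delta_{j-1}]$ and $B = k[x_j]$ at each step, using $S_j \cong S_{j-1} \ot_{\tau_j} k[x_j]$ where the twisting map satisfies $\tau_j(x_j \ot r) = r \ot x_j + \delta_j(r)\ot 1$ for $r \in S_{j-1}$. The base case $t=1$ is the standard Koszul resolution of $k[x_1]$, which already has the stated form with $V_1 = k\{x_1\}$. For the inductive step, let $K^{(j-1)}_{\DOT}$ denote the Chevalley--Eilenberg-type resolution already produced for $S_{j-1}$, and take $\text{Kos}_{\DOT}(k[x_j])$ on the other side; note this is concentrated in degrees $0$ and $1$.

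Since $\delta_j$ is only filtered, the twisting map $\tau_j$ is not strongly graded, so Proposition~\ref{prop:bar-Koszul}(iii) does not apply directly. Following the strategy of Example~\ref{ex:weylagain}, I would instead use Proposition~\ref{prop:bar-Koszul}(ii) together with Remark~\ref{embeddingresolutions}: embed $K^{(j-1)}_{\DOT} \hookrightarrow \overline{\text{Bar}}_{\DOT}(S_{j-1})$ and $\text{Kos}_{\DOT}(k[x_j]) \hookrightarrow \overline{\text{Bar}}_{\DOT}(k[x_j])$, and verify that the iterated twisting maps defined on the reduced bar resolutions restrict to well-defined chain maps on these subresolutions. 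The essential point is that any constant term of $\delta_j(x_i)$ lies in $k \cdot 1$ and hence is killed modulo $k \cdot 1$, so only $\bar{\delta}_j(x_i) \in V_{j-1}$ survives --- exactly the projection appearing in the stated differential. Compatibility with embeddings into free modules, as required for Lemma~\ref{lem:proj}, is then automatic from the reduced bar embedding.

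With compatibility in hand, Theorem~\ref{maintheorem1} yields a projective $S_j$-bimodule resolution $K^{(j)}_{\DOT}$ as the total complex of $X_{i,l} = K^{(j-1)}_i \ot \text{Kos}_l(k[x_j])$. Because $\text{Kos}_{\DOT}(k[x_j])$ has only two nonzero terms, one directly identifies $K^{(j)}_n \cong S_j \ot \Wedge^n V_j \ot S_j$ with $V_j = V_{j-1} \oplus k\{x_j\}$. For the differential, the horizontal piece $d^h = d \ot 1$ applied to wedges not involving $x_j$ reproduces the inductively-established Chevalley--Eilenberg differential, accounting for the $x_{l_i}$-terms and the $\bar{\delta}_{l_j}(x_{l_i})$-terms with $l_j < j$; the vertical piece $d^v$ gives the commutator-with-$x_j$ portion for the rightmost wedge factor; and the twisting needed to push $x_j$ past each $x_{l_i}$ in the bimodule action of $S_j$ on $K^{(j)}_{\DOT}$ --- that is, applying $\tau_j$ --- yields precisely the remaining terms $\bar{\delta}_j(x_{l_i})$ in the second sum of the stated formula.

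The main obstacle is verifying, at each inductive step, the compatibility of $K^{(j-1)}_{\DOT}$ with $\tau_j$: the chain map $\tau^{}_{B,\DOT}: k[x_j] \ot K^{(j-1)}_{\DOT} \to K^{(j-1)}_{\DOT} \ot k[x_j]$ must be constructed in tandem with $K^{(j-1)}_{\DOT}$ itself, and nests with the tower of twisting maps already built from the earlier Ore extensions. This is where the reduced bar embedding is most useful: the uniformly-defined reduced-bar twisting map descends to the Chevalley--Eilenberg subresolution because constant terms drop out, and the required nested associativity collapses to repeated applications of~(\ref{eqn:taumm}) together with the derivation identity~(\ref{def:deriv}). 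The analogous statement for $\tau^{}_{\DOT,A}$ on $\text{Kos}_{\DOT}(k[x_j])$ is comparatively straightforward since this resolution has length one.
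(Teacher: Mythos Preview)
Your proposal is correct and follows essentially the same approach as the paper: induct on $t$, embed the inductively-built resolution into the reduced bar resolution, verify that the reduced-bar twisting map $\bar{\tau}_{B,\DOT}$ restricts to this subcomplex (using that constant terms of $\delta_j$ vanish in $\bar{A}$), apply Theorem~\ref{maintheorem1}, and identify the resulting total complex with $S\ot\Wedge^{\DOT} V\ot S$. The paper supplies the explicit symmetrization embedding $\phi_n$ and a detailed computation of the differential under the isomorphism, but your outline captures all the essential steps.
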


\begin{proof}
We induct on $t$.
For each $i$, the Koszul resolution of $k[x_i]$ is embedded in the
(reduced) bar resolution of $k[x_i]$ as
\begin{equation}\label{eqn:Koszuli}
  0 \rightarrow k[x_i]\ot \Span_k\{x_i\}\ot k[x_i] \stackrel{d_1}{\longrightarrow}
   k[x_i]\ot k[x_i] \stackrel{m}{\longrightarrow} k[x_i] \rightarrow 0 \, ,
\end{equation}
where $d_1(1\ot x_i\ot 1) = x_i\ot 1 - 1\ot x_i$ and $m$ is multiplication. 
For $t=i=1$, the complex~(\ref{eqn:Koszuli})
is a resolution of $S$ satisfying the statement
of the theorem.

Now assume $t\geq 2$ and that the iterated Ore extension $A=k[x_1,\ldots, x_{t-1}; 
\delta_2,\ldots, \delta_{t-1}]$ has a free bimodule resolution $P_{\DOT}(A)$
as in the theorem. 
Let $B=k[x_t]$ and let $P_{\DOT}(B)$ be the Koszul resolution~(\ref{eqn:Koszuli}) for $i=t$.
Then $S=\ttp$
where 
$$
  \tau(x_t\ot a) = a\ot x_t + \delta_t(a)\ot 1
\quad\text{ for all } a\in A\, .
$$


\subsection*{Embedding into the reduced bar resolution}
We embed $P_{\bu}(A)$ into the reduced bar resolution
$\overline{\text{Bar}}_{\DOT}(A)$
and then define twisting maps for $P_{\bu}(A)$ via this embedding:
Let $\phi_n: P_n(A)\rightarrow A^{\ot (n+2)}$ 
be the standard symmetrization map 
defined by
$$
\phi_n(1\ot x_{l_1}\wedge \cdots\wedge x_{l_n}\ot 1) =
  \sum_{\sigma\in\Sym_n} \sgn \sigma \ot x_{l_{\sigma(1)}}\ot \cdots \ot x_{l_{\sigma(n)}} \ot 1
$$
for all $1\leq l_1<\cdots < l_n \leq t-1$.
This is a chain map from $P_{\bu}(A)$ to
$\text{Bar}_{\DOT}(A)$.
Compose with the quotient map 
${\text{Bar}}_{\DOT}(A)\rightarrow
\overline{\text{Bar}}_{\DOT}(A)$
to obtain a chain map
$$
\bar{\phi}_{\DOT}:
P_{\DOT}(A)\longrightarrow\overline{\text{Bar}}_{\DOT}(A)\,
.
$$
Note that the image of $P_{\bu}(A)$  in the bar resolution
$\text{Bar}_{\DOT}(A)$, under $\phi_{\DOT}$, 
intersects the kernel of this quotient
map trivially. 
Thus the induced map $\bar{\phi}_{\bu}$ is injective.

\subsection*{Iterated twisting}
The reduced bar
resolution is compatibile with $\tau$
via the map
$$
\bar{\tau}^{}_{B,\DOT}:
B\ot\overline{\text{Bar}}_{\DOT}(A)
\longrightarrow
\overline{\text{Bar}}_{\DOT}(A)\ot B\, 
$$
from the
proof of Proposition~\ref{prop:bar-Koszul}(ii).
We argue that $\bar{\tau}^{}_{B,\DOT}$
restricts to a surjective map
$$
\tilde{\tau}^{}_{B,\DOT}:
B\ot P_{\DOT}(A)\longrightarrow P_{\DOT}(A)\ot B$$
by verifying that it preserves the image of $\bar{\phi}_{\DOT}$,
i.e., $\bar{\tau}^{}_{B,n}$ takes
$B\ot \text{Im}(\bar{\phi}_{n})$ 
  onto $\text{Im}(\bar{\phi}_{n}) \ot B$
  for all $n$.
We apply $\bar{\tau}^{}_{B,n}$ to 
$$x_t \ot \bar{\phi}_n(a_0\ot y_1\wedge\cdots\wedge y_n\ot a_{n+1})
=
\sum_{\pi\in \text{Sym}_n} \sgn\pi\
(x_t\ot a_0\ot y_{\pi(1)}\ot\cdots\ot y_{\pi(n)} \ot a_{n+1}),
$$
for some $a_0, a_{n+1}$ in $A$, in order to move $x_t$
to the far right, obtaining
$$
\Big(\sum_{\pi\in \text{Sym}_n} (\sgn\pi)\, 
a_0 \ot y_{\pi(1)}\ot\cdots\ot y_{\pi(n)} \ot a_{n+1} \Big) \ot x_t
\in \text{Im}(\bar{\phi}_{n})\ot B
$$
plus additional terms that arise from the
relation 
$\tau(x_t\ot y_{\pi(i)}) = y_{\pi(i)} \ot x_t
+\delta_t(y_{\pi(i)})\ot 1$.
(We use the same notation for elements of $A$ and their 
images under the quotient map
$A\rightarrow \bar{A}$ in cases where
no confusion can arise.) 
Since $\tau(1\ot y_j)=y_j\ot 1$ for all $j$,
these additional terms sum to
$$
\begin{aligned}
& \sum_{\pi\in \text{Sym}_n} 
(\sgn\pi)\
\delta_t(a_0)  \ot  y_{\pi(1)}\ot\cdots\ot y_{\pi(n)} \ot a_{n+1}  \ot 1
\\
&\ \ \ \quad
+ \sum_{\pi\in \text{Sym}_n} 
\sum_{1\leq i\leq n} 
(\sgn\pi)\
a_0 \ot y_{\pi(1)}\ot\cdots\ot \bar{\delta}_t(y_{\pi(i)})\ot y_{\pi(i+1)} \ot\cdots\ot y_{\pi(n)}\ot 
a_{n+1}\ot 1\
\\
& \hspace{10ex}
+
\sum_{\pi\in \text{Sym}_n} 
(\sgn\pi)\
a_0 \ot  y_{\pi(1)}\ot\cdots\ot y_{\pi(n)} \ot \delta_t(a_{n+1}) \ot 1 
\\
=
&\ \ 
\bar{\phi}_n\big(
\delta_t(a_0)\ot y_{1}\wedge\cdots\wedge y_{n}\ot a_{n+1}\big)\ot 1
\ +\ 
\bar{\phi}_n\big(
a_0\ot y_{1}\wedge\cdots\wedge  y_{n}\ot \delta_t(a_{n+1}) \big)\ot 1
\\
& \quad\quad +
\sum_{1\leq i \leq n}
\bar{\phi}_n\big(
a_0 \ot y_{1}\wedge\cdots\wedge \bar{\delta}_t(y_{i})\wedge y_{i+1}\wedge y_{n}\ot a_{n+1}\big) \ot 1 \, 
\in\text{Im}(\bar{\phi}_{n})\ot B\,  .
\end{aligned}
$$
We may replace $x_t$ by $x_t^m$ in the above computation
using induction after noting that
$\tau(x_t^m \ot x_i)
= (1\ot m_B)\tau(x_t \ot \big(\tau (x_t^{m-1} \ot x_i)\big)$ for $i<t$.
The above arguments can be modified to apply to
$\bar{\tau}^{-1}_{B,i}$ as well.
Thus the chain map $\bar{\tau}^{}_{B,\DOT}$ preserves the image of $\bar{\phi}_{\DOT}$
and restricts to
a surjective chain map
$\tilde{\tau}^{}_{B, \DOT}: B\ot P_{\DOT}(A)\longrightarrow P_{\DOT}(A)\ot B$
as claimed.

\subsection*{Compatibility on one side}
The complex
$P_{\DOT}(A)$ inherits compatibility with $\tau$
from the compatibility of the reduced bar complex
$\overline{\text{Bar}}_{\DOT}(A)$ with $\tau$.
Indeed, since 
$\overline{\text{Bar}}_{\DOT}(A)$
is compatible
with $\tau$ via a map $\bar{\tau}^{}_{B,\DOT}$
which 
preserves
the embedding
$\bar{\phi}_{\DOT}: P_{\DOT}(A) \hookrightarrow 
\overline{\text{Bar}}_{\DOT}(A)$, 
the complex
$P_{\DOT}(A)$ is compatible with $\tau$
via
the restriction
$\tilde{\tau}^{}_{B,\DOT}$ of
$\bar{\tau}^{}_{B,\DOT}$ to $B\ot P_{\DOT}(A)$.
(See Proposition~\ref{prop:bar-Koszul}(ii)
and its proof and
Remark~\ref{embeddingresolutions}.)

\subsection*{Compatibility on the other side}
Define a chain map $\tau^{}_{\DOT , A}: P_{\DOT}(B)\ot A\rightarrow A\ot P_{\DOT}(B)$
by setting $\tau^{}_{0,A} = (\tau\ot 1)(1\ot \tau)$
and
$$
  \tau^{}_{1,A}((1\ot x_t\ot 1)\ot x_i) = x_i\ot (1\ot x_t\ot 1)
$$
and then extending (uniquely) to $P_1(B)\ot A$ by requiring that compatibility
conditions~(\ref{compatible1}) and~(\ref{compatible4}) hold.
A calculation shows that $\tau^{}_{\DOT , A}$ is a chain map and that
$P_{\DOT}(B)$ is  compatible with $\tau$. 
By their definitions, $\tau^{}_{0,A}$ and $\tau^{}_{1,A}$ are 
compatible with the  embeddings of $P_0(B)$ and $P_1(B)$
into corresponding terms of the (reduced) bar resolution. 

\subsection*{Twisted product resolution}
By construction, the twisted  product resolution $K_{\DOT}$
arising from $P_{\DOT}(A)$ and $P_{\DOT}(B)$ in degree~$n$ is isomorphic 
to $S\ot \Wedge^n V \ot S$ as an $S$-bimodule via the isomorphisms 
$$
\begin{aligned}
&  A\ot \Wedge^i \Span_k\{x_1,\ldots, x_{t-1}\} \ot A\ot B \ot 
  \Wedge^j \Span_k\{x_t\}\ot B \\
& \hspace{2cm}
  \stackrel{\sim}{\longrightarrow} 
\ A\ot B \ot \Wedge^i \Span_k \{ x_1,\ldots,x_{t-1}\} \ot 
  \Wedge^j \Span_k \{x_t\} \ot A\ot B \, , 
\end{aligned}
$$
for $j=0,1$, 
given by applying $\tau^{-1}$ (properly interpreted for each factor)
to the innermost tensor factors $A$ and $B$. 
We check the differentials:
On $X_{n,0}$, the differential is just that arising from the factor $P_n(A)$.
Now consider on $X_{ n -1,1} $,
again writing $x_{l_i} = y_i$   
for some indices $1\leq l_1<\cdots < l_n \leq t-1$: 
$$
\begin{aligned}
d_n & (1\ot y_{1}\wedge\cdots\wedge y_{{n-1}}\ot 1\ot 1\ot x_t \ot 1) \\
&=    \Big( \sum_{1\leq i\leq n-1} (-1)^{i+1} \big(y_{i}\ot
   y_{1}\wedge \cdots \hat{y}_{i}\wedge \cdots \wedge y_{{n-1}}\ot 1
- 1\ot y_{1}\wedge \cdots\wedge \hat{y}_{i} \wedge \cdots \wedge y_{{n-1}}\ot 
   y_{i} \big) \\
& \hspace{1cm} 
+ \sum_{1\leq i< j \leq n-1} (-1)^{j} \ot 
   y_{1}\wedge\cdots \wedge \bar{\delta}_{j}({y}_{i})
\wedge\cdots\wedge\hat{y}_{j}\wedge
  \cdots \wedge y_{{n-1}}\ot 1 \Big)\ot (1\ot x_t\ot 1)\\
&\hspace{3cm}+(-1)^{n-1} (1\ot y_{1}\wedge \cdots \wedge y_{{n-1}}\ot 1)
  \ot (x_t\ot 1 - 1\ot x_t) \, , 
\end{aligned}
$$
which may be rewritten, under the above isomorphism, as 
$$
\begin{aligned}
\sum_{1\leq i\leq n-1} (-1)^{i+1} & y_{i}\ot  y_{1}
    \wedge\cdots\wedge \hat{y}_{i} \wedge\cdots
   \wedge y_{{n-1}}\ot x_t\ot  1\\
 -\ \sum_{1\leq i\leq n-1}& (-1)^{i+1} \ot  y_{1}\wedge \cdots \wedge \hat{y}_{i}\wedge
   \cdots \wedge y_{{n-1}} \ot x_t \ot y_{i} \\
  & + \sum_{1\leq i< j\leq n-1} (-1)^{j} \ot 
   y_{1}\wedge \cdots\wedge \bar{\delta}_{j}({y}_{i})\wedge\cdots\wedge \hat{y}_{j}\wedge
   \cdots\wedge y_{{n-1}}\ot x_t\ot  1\\
& \hspace{1.3cm}+(-1)^{n-1}x_t\ot y_{1}\wedge\cdots\wedge y_{{n-1}}\ot 1 +
   (-1)^{n}\ot y_{1}\wedge\cdots \wedge y_{{n-1}}\ot x_t\\
& \hspace{2.75cm}  
   + (-1)^n\sum_{1\leq i\leq n-1} 1\ot y_{1}\wedge\cdots\wedge
   \bar{\delta}_t(y_{i})\wedge\cdots\wedge 
   y_{{n-1}}\ot 1 \, . 
\end{aligned}
$$
Once one sets $y_{n}=x_t$, 
identifies $y_{1}\wedge \cdots \wedge y_{{n-1}} \ot x_t$
with $y_{1}\wedge\cdots\wedge y_{{n-1}}\wedge x_t$, and makes other
similar identifications, this
agrees with the differential in the statement.
\end{proof} 

\vspace{1ex}

\subsection*{Examples}
The theorem applies in particular to 
the universal enveloping algebra
$\Ug$ of a finite dimensional solvable Lie algebra  
$\mathfrak g$.  Here, we assume
the underlying field 
$k$ is algebraically closed,
else $\mathfrak g$ should be supersolvable;
see~\cite[1.3.14]{Dixmier} and~\cite[Section 3]{BHZZ}.
The theorem gives a bimodule Koszul resolution of $\Ug$. 
Semisimple Lie algebras can then be handled via triangular decomposition.
Other examples include Weyl algebras and Sridharan enveloping algebras~\cite{Sridharan}.

\section{Twisted product resolutions for (left) modules}\label{sec:construction2}

We now consider a twisted product resolution of left modules
instead of bimodules.  We give the one-sided version
of bimodule constructions in Sections~\ref{sec:preliminaries}
and~\ref{sec:construction}. 
Again, we fix $k$-algebras $A$ and $B$ with a twisting map 
$\tau: B\ot A\rightarrow A\ot B$.
In the constructions below, we consider compatible $A$-modules,
but note that we as easily
could have started with compatible $B$-modules instead of $A$-modules
using the inverse twisting map $\tau^{-1}$ instead of $\tau$
in order to lift (left) modules of $A$ and $B$
to (left) modules of $A\ot B=B\ot_{\tau^{-1}}A$.

Let $M$ be an $A$-module with module structure
map $\rho^{}_{A,M}: A\ot M \rightarrow M$
and recall the multiplication map $m_B: B\ot B\rightarrow B$.

\vspace{1ex}

\begin{definition}{\em 
The $A$-module $M$ is 
{\em compatible with the twisting map $\tau$} if there is a bijective
$k$-linear map $\tau^{}_{B,M}: B\ot M\rightarrow M\ot B$ 
such that
\begin{align}
\label{comp1} 
\tau^{}_{B,M}(m_B\ot 1) & = (1\ot  m_B)(\tau^{}_{B,M}\ot 1)(1\ot \tau^{}_{B,M}) \quad\text{and}\\ 
\label{comp2} 
\tau^{}_{B,M}(1\ot \rho^{}_{A,M}) & =
(\rho^{}_{A,M}\ot 1)(1\ot \tau^{}_{B,M})
   (\tau \ot 1)\, 
\end{align}
as maps on $B\ot B\ot M$ and on $B\ot A\ot M$, respectively. 
}\end{definition}

\vspace{1ex}

Note that this definition is
equivalent to the
commutativity of a diagram similar to~(\ref{double-commutative-diagram}),
where $\rho^{}_{A,M}$ is replaced by a one-sided module
structure map.

Let $N$ be a $B$-module with module structure map $\rho^{}_{B,N}: B\ot N \rightarrow N$. 
In case $M$ is compatible with $\tau$, 
the tensor product $M\ot N$ may be
given the structure of an $\ttp$-module via the following composition of maps:
\begin{equation}\label{eqn:mod-str}
\begin{xy}*!C\xybox{
\xymatrix{
A\! \ot_{\tau}\!  B \ot\!  M\! \ot \! N
\ar[rr]^{1\ot\, \tau^{}_{B,M}\ot 1}
&& A \! \ot \! M\ot \! B\! \ot\!  N 
\ar[rr]^{\hspace{.5cm}\rho^{}_{A,M}\ot \rho^{}_{B,N}}
&& M\ot N \, .
}}
\end{xy}
\end{equation}

Let $P_{\DOT}(M)$ 
be an $A$-projective resolution of $M$
and $P_{\DOT}(N)$ a $B$-projective resolution of~$N$:
\[
\begin{aligned}
 &  \cdots\rightarrow P_2(M) \rightarrow P_1(M)\rightarrow P_0(M)\rightarrow k\rightarrow 0 \, , \\
 &   \cdots \rightarrow P_2(N)\, \rightarrow P_1(N)\, \rightarrow P_0(N)\, \rightarrow k\rightarrow 0 \, .
\end{aligned}
\]

\vspace{1ex}

\begin{definition}{\em 
Let $M$ be an $A$-module that is compatible with $\tau$. 
The projective module resolution $P_{\DOT}(M)$ of the $A$-module $M$
is {\em compatible with the twisting map $\tau$} if each
$P_i(M)$ is compatible with $\tau$ via maps $\tau^{}_{B,i}$ for which
$\tau^{}_{B,\DOT}: B\ot P_{\DOT}(M)\rightarrow P_{\DOT}(M)\ot B$ 
is a $k$-linear chain map lifting
$\tau^{}_{B,M}: B\ot M \rightarrow M\ot B$.
}\end{definition}

Under the assumption of compatibility, we make the following definition. 

\vspace{1ex}

\begin{definition}{\em 
Let $M$ be an $A$-module compatible with $\tau$ and $P_{\bu}(M)$ a projective
resolution of $M$ that is compatible with $\tau$.
Let $N$ be a $B$-module. 
The {\em twisted product complex} $Y_{\DOT}$ is the total complex
of the bicomplex $Y_{\DOT,\DOT}$ defined by 
\begin{equation}\label{eqn:Y}
   Y_{i,j}  = P_i(M)\ot P_j(N) \, , 
\end{equation}
with $A\ot_{\tau} B$-module structure given by the maps $\tau^{}_{B,\bu}$
as in equation~(\ref{eqn:mod-str}) and with
vertical and horizontal differentials given by $d^h_{i,j}= d_i\ot 1$
and $d^v_{i,j}= (-1)^i\ot d_j$. 
In other words, 
 $Y_n = \oplus _{i+j=n} Y_{i,j}$ with $d_n = \sum_{i+j=n} d_{i,j}$ where $d_{i,j}=
d^h_{i,j}+ d^v_{i,j}$. 
}\end{definition} 

\vspace{1ex}

\begin{lemma}\label{augmented-algebra-action}
Assume $M$ and $P_{\DOT}(M)$ are  compatible with $\tau$.
Then the twisted product complex $Y_{\DOT}$ is a complex of $\ttp$-modules.
\end{lemma}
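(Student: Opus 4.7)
The plan is to parallel the bimodule argument of Lemma~\ref{lem:construction} in its one-sided version, checking in turn that each $Y_{i,j}$ carries an $\ttp$-module structure and that the horizontal and vertical differentials are $\ttp$-module homomorphisms. The fact that the total differential squares to zero is automatic from totalizing the tensor product bicomplex of $k$-vector spaces with the standard sign convention, so the only substantive content is about intertwining the $\ttp$-action with the various maps.

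First I would equip each $Y_{i,j}=P_i(M)\ot P_j(N)$ with the module structure given by equation~(\ref{eqn:mod-str}), where $\tau^{}_{B,M}$ is replaced by the compatibility map $\tau^{}_{B,i}$. The unit axiom follows immediately from the unit normalizations $\tau(1_B\ot a)=a\ot 1_B$ and $\tau^{}_{B,i}(1_B\ot p)=p\ot 1_B$ built into the definition of compatibility. Associativity $\rho\circ(m_\tau\ot 1)=\rho\circ(1\ot\rho)$ breaks into: the $A$-module associativity of $P_i(M)$ paired with the $B$-module associativity of $P_j(N)$, both automatic; and one diagrammatic identity that must reconcile the multiplication twist $\tau$ of $\ttp$ with the module twist $\tau^{}_{B,i}$. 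This remaining identity is exactly the conjunction of the twist equation~(\ref{eqn:taumm}) with the compatibility conditions~(\ref{comp1}) and~(\ref{comp2}) for $P_i(M)$, mirroring the verification of associativity in Proposition/Definition~2.3 of~\cite{CSV}.

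Next I would verify $\ttp$-linearity of the differentials. The horizontal map $d^h_{i,j}=d_i\ot 1$ is $A$-linear because $d_i\colon P_i(M)\to P_{i-1}(M)$ is an $A$-module map, and it intertwines the (twisted) $B$-action because $\tau^{}_{B,\DOT}$ is a chain map lifting $\tau^{}_{B,M}$, giving $(d_i\ot 1)\circ\tau^{}_{B,i}=\tau^{}_{B,i-1}\circ(1\ot d_i)$; substituting this into~(\ref{eqn:mod-str}) shows $d_i\ot 1$ commutes with the action of $1\ot b$, and combining with $A$-linearity yields full $\ttp$-linearity. The vertical map $d^v_{i,j}=(-1)^i\ot d_j$ commutes with the $A$-action trivially, since $A$ only touches the left tensor factor, and with the $B$-action because $d_j$ is $B$-linear and occurs on the right of $\tau^{}_{B,i}$ in~(\ref{eqn:mod-str}).

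The principal obstacle is the associativity diagram in the second step: one must carefully track how $\tau$, which governs how $B$ commutes past $A$ inside $\ttp$, interleaves with $\tau^{}_{B,i}$, which governs how $B$ moves through the module $P_i(M)$. I expect this to be a direct but notationally heavy adaptation of the proof of Proposition/Definition~2.3 of~\cite{CSV} to the one-sided module setting, with no genuinely new complication once the correct diagram is drawn.
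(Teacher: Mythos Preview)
Your proposal is correct and follows the same approach as the paper: endow each $Y_{i,j}$ with the $\ttp$-module structure from diagram~(\ref{eqn:mod-str}) via $\tau^{}_{B,i}$, and use that $\tau^{}_{B,\DOT}$ is a chain map to see the differentials are $\ttp$-linear. The paper's proof simply asserts these two points in two sentences, whereas you have unpacked the associativity verification and the separate $A$- and $B$-linearity checks for the horizontal and vertical differentials; nothing in your expansion deviates from or adds to the paper's argument.
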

\begin{proof}
Each space $Y_{i,j}$ is given the structure of an $\ttp$-module 
via diagram~(\ref{eqn:mod-str}). 
The differentials are module homomorphisms since $\tau^{}_{B,\DOT}$ is
a chain map. 
\end{proof}

\begin{lemma}\label{lem:exact2}
The twisted product complex $\cdots \rightarrow
 Y_2 \rightarrow Y_1\rightarrow Y_0\rightarrow M\ot N\rightarrow 0$
is exact.
\end{lemma}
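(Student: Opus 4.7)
The argument should parallel the proof of Lemma~\ref{lem:exact} very closely: I would forget the $\ttp$-module structure on $Y_\bullet$ coming from Lemma~\ref{augmented-algebra-action} and compute homology purely at the level of $k$-vector spaces, where the twisting map $\tau$ no longer matters. In other words, the plan is to observe that as a bicomplex of $k$-vector spaces, $Y_{\bullet,\bullet}$ is simply the tensor product bicomplex of $P_{\bullet}(M)$ and $P_{\bullet}(N)$, and then apply the K\"unneth Theorem to the total complex $Y_\bullet$.

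Once this reduction is made, I would invoke~\cite[Theorem~3.6.3]{W} to obtain, for each $n$, a short exact sequence
$$
0\longrightarrow \bigoplus_{i+j=n}\coh_i\bigl(P_\bullet(M)\bigr)\ot \coh_j\bigl(P_\bullet(N)\bigr)\longrightarrow \coh_n(Y_\bullet)\longrightarrow \bigoplus_{i+j=n-1}\Tor^k_1\bigl(\coh_i(P_\bullet(M)),\coh_j(P_\bullet(N))\bigr)\longrightarrow 0.
$$
Since $P_\bullet(M)$ and $P_\bullet(N)$ are projective resolutions of $M$ and $N$, their only nonzero homologies are $\coh_0(P_\bullet(M))\cong M$ and $\coh_0(P_\bullet(N))\cong N$, and because $k$ is a field every $\Tor^k_1$ term vanishes. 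Hence $\coh_n(Y_\bullet)=0$ for $n>0$ and $\coh_0(Y_\bullet)\cong M\ot N$, which is exactly the statement of exactness of the augmented twisted product complex.

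There is no genuine obstacle here, since the only change from Lemma~\ref{lem:exact} is that bimodule resolutions have been replaced by one-sided module resolutions, and the K\"unneth argument is insensitive to module structures; it only uses that $P_\bullet(M)$ and $P_\bullet(N)$ are chain complexes of $k$-vector spaces with known homology. The bulk of the work for the one-sided theory has already been absorbed into verifying, via compatibility and Lemma~\ref{augmented-algebra-action}, that the total complex is even a complex of $\ttp$-modules; exactness itself is then immediate from the vector-space computation above.
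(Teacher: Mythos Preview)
Your proposal is correct and follows exactly the same approach as the paper: the paper's proof simply says to apply the K\"unneth Theorem as in the proof of Lemma~\ref{lem:exact} to obtain $\coh_n(Y_{\DOT})=0$ for $n>0$ and $\coh_0(Y_{\DOT})\cong M\ot N$. Your write-up is more detailed than the paper's one-line proof, but the method is identical.
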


\begin{proof}
As in the proof of Lemma~\ref{lem:exact}, 
apply the 
K\"unneth Theorem to obtain $\coh_n(Y_{\DOT}) =0$ for all $n>0$ and
$\coh_0(Y_{\DOT})\cong M\ot N$. 
\end{proof}

We wish to prove in general that the modules $Y_{i,j}$ are projective,
so we make an additional assumption in the next lemma.
Since $P_{\DOT}(M)$ is a projective resolution of $M$
as an $A$-module, each $P_i(M)$ embeds in a free $A$-module $A^{\oplus I}$.
\begin{definition}{\em 
\label{compatible-with-embedding2}
For each $i\geq 0$, the map $\tau^{}_{B,i}$ is {\em compatible with a chosen
embedding} 
$P_i(M) \hookrightarrow A^{\oplus I}$
(for some indexing set $I$) if the corresponding diagram is commutative: 
\begin{small}
$$
\entrymodifiers={+!!<0pt,\fontdimen22\textfont2>}
\xymatrixcolsep{6ex}
\xymatrixrowsep{6ex}
\xymatrix{
B\otimes {P_i(M)}\lhook\mkern-7mu \ar[rr] 
\ar[d]^{\tau^{}_{B,i}}
& & 
B\otimes A^{\oplus I}
\ar[d]^{\tau^{\oplus I}} 
\\ 
{P_i(M)}\otimes B\lhook\mkern-7mu \ar[rr] 
& & 
A^{\oplus I}\otimes B\, .
}
$$
\end{small}
}
\end{definition}

In many settings, one  proves directly that 
the modules $Y_{i,j}$ are projective---e.g.\  the 
Ore extensions in the next section---and so one
does not need this additional compatibility assumption, nor the next lemma.

\begin{lemma}\label{lem:proj2}
For  $i\geq 0$, if 
 $\tau^{}_{B,i}$ is compatible with a chosen embedding of $P_i(M)$ into a free 
$A$-module, then $Y_{i,j}=P_i(M)\ot P_j(N)$ is a projective $\ttp$-module.
\end{lemma}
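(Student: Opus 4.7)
The plan is to mimic the proof of the bimodule version, Lemma~\ref{lem:proj}, in the one-sided setting, proceeding in three steps.

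First, I would handle the free case with identity embeddings. When $P_i(M)=A$ and $P_j(N)=B$ with $\tau^{}_{B,0}=\tau$, the $\ttp$-module $A\ot B$ equipped with the structure from diagram~(\ref{eqn:mod-str}) is just $\ttp$ itself as a left $\ttp$-module, since the action reduces to left multiplication in the twisted tensor product. Extending to free modules $P_i(M)=A^{\oplus I}$ and $P_j(N)=B^{\oplus J}$ with identity embeddings and $\tau^{}_{B,i}=\tau^{\oplus I}$, we obtain $A^{\oplus I}\ot B^{\oplus J}\cong \ttp^{\oplus(I\times J)}$ as an $\ttp$-module, which is free.

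Next, for general projective $P_i(M)$ and $P_j(N)$ with compatible embeddings $\iota_M:P_i(M)\hookrightarrow A^{\oplus I}$ and $\iota_N:P_j(N)\hookrightarrow B^{\oplus J}$, I would verify that the tensor product embedding $\iota_M\ot \iota_N$ is a morphism of $\ttp$-modules. This uses that $\iota_M$ is $A$-linear, that $\iota_N$ is $B$-linear, and crucially that $\tau^{}_{B,i}$ is compatible with $\iota_M$ in the sense of Definition~\ref{compatible-with-embedding2}, so that the $B$-action on the ambient $A^{\oplus I}\ot B^{\oplus J}$ via $\tau^{\oplus I}$ agrees, on the common subspace, with the $B$-action on $P_i(M)\ot P_j(N)$ given via $\tau^{}_{B,i}$.

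Finally, using the $A$-module retraction $\pi_M:A^{\oplus I}\to P_i(M)$ and the $B$-module retraction $\pi_N:B^{\oplus J}\to P_j(N)$ coming from the projectivity of $P_i(M)$ and $P_j(N)$, I would check that $\pi_M\ot \pi_N$ is likewise an $\ttp$-module map, realizing $P_i(M)\ot P_j(N)$ as an $\ttp$-module summand of the free module $A^{\oplus I}\ot B^{\oplus J}$, and hence as a projective $\ttp$-module. The main obstacle I anticipate is precisely this last verification: $A$-linearity of the retraction is immediate from that of $\pi_M$ and $\pi_N$, but $B$-linearity requires combining the compatibility hypothesis on $\tau^{}_{B,i}$ with the twisting $\tau^{\oplus I}$ on the ambient free module, so that the projection $\pi_M\ot 1$ intertwines the two $B$-actions. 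Once this is established, the conclusion follows immediately from the fact that a direct summand of a free module is projective.
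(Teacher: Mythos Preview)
Your three-step plan is exactly the paper's approach; indeed the paper's proof of Lemma~\ref{lem:proj2} is only two sentences, asserting that the compatibility hypothesis reduces the question to the case $P_i(M)=A$, $P_j(N)=B$, where $A\ot B$ is the regular $\ttp$-module.

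You are more careful than the paper in isolating the retraction $\pi_M\ot\pi_N$ as the delicate point.  The paper does not address it here, and in the bimodule analogue (Lemma~\ref{lem:proj}) the argument stops after observing that the \emph{embedding} is an $(\ttp)^e$-module map.  Note that the $N$-side of your splitting is harmless: since the $\ttp$-action reaches the second tensor factor only through $B$ after the twist, any $B$-linear map on that factor---in particular $\pi_N$---is automatically $\ttp$-linear, so one is reduced to showing that $P_i(M)\ot B$ is $\ttp$-projective.  Your concern about $\pi_M\ot 1$, however, is genuine.  Compatibility of $\tau_{B,i}$ with $\iota_M$ says that $\tau^{\oplus I}$ carries $B\ot\iota_M(P_i(M))$ bijectively onto $\iota_M(P_i(M))\ot B$, making $P_i(M)\ot B$ an $\ttp$-\emph{submodule} of $\ttpp^{\oplus I}$; but it does not force $\tau^{\oplus I}$ to preserve a chosen $A$-module complement of $P_i(M)$, so the identity $(\pi_M\ot 1)\,\tau^{\oplus I}=\tau_{B,i}\,(1\ot\pi_M)$ that you would need is not a formal consequence of the stated hypothesis.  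The paper is silent on this point as well.
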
 

\begin{proof}
By the hypothesis, it suffices to prove the lemma in case $P_i(A)=A$
and $P_j(B)=B$.
In that case, $A\ot B$ is the right regular module $\ttp$ by definition, and so is free. 
\end{proof}

\vspace{1ex}

Combining Lemmas~\ref{augmented-algebra-action}, \ref{lem:exact2}, 
and~\ref{lem:proj2}, we obtain
the following theorem.

\begin{thm}\label{maintheorem2}
Let $A$ and $B$ be  $k$-algebras 
with twisting map $\tau:B\ot A \rightarrow A\ot B$. 
Let $P_{\DOT}(M)$ and $P_{\DOT}(N)$ be projective $A$- and $B$-module 
resolutions of $M$ and $N$, respectively.
Assume $M$ and $P_{\DOT}(M)$ are  compatible with $\tau$ 
and that the corresponding maps $\tau^{}_{B,i}$
are compatible with chosen embeddings of $P_i(M)$ into free $A$-modules. Then 
the twisted product complex with 
$$
Y_n = \oplus _{i+j=n} Y_{i,j}
\quad\quad\text{ for }\quad
   Y_{i,j}  = P_i(M)\ot P_j(N) \,  
$$
gives a projective resolution of $M\ot N$ 
as a module over the twisted tensor product $\ttp$:
$$
  \cdots\rightarrow Y_2\rightarrow Y_1\rightarrow Y_0\rightarrow 
  M\ot N \rightarrow 0\, .
$$
\end{thm}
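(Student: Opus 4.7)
The plan is to assemble the theorem directly from the three preparatory lemmas already established in Section~\ref{sec:construction2}. The statement decomposes cleanly into three independent assertions about the total complex $Y_{\DOT}$: that it is a complex of $\ttp$-modules, that it is exact in positive degrees with $H_0 \cong M\ot N$, and that each term $Y_n$ is a projective $\ttp$-module. Each of these is handled by exactly one of the prior lemmas.

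First I would invoke Lemma~\ref{augmented-algebra-action} to equip each bidegree piece $Y_{i,j}=P_i(M)\ot P_j(N)$ with an $\ttp$-module structure via the composition in~(\ref{eqn:mod-str}), using the compatibility maps $\tau^{}_{B,i}$. The fact that the horizontal and vertical differentials are $\ttp$-linear uses precisely that $\tau^{}_{B,\DOT}$ is a chain map lifting $\tau^{}_{B,M}$; this is built into the compatibility hypothesis on $P_{\DOT}(M)$. Totalizing then gives $Y_{\DOT}$ the structure of a complex of $\ttp$-modules, as desired.

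Next I would apply Lemma~\ref{lem:exact2}, which handles exactness via the K\"unneth theorem applied to the tensor product bicomplex (using that the ground ring is a field, so the $\Tor_1^k$ term vanishes, and that $P_{\DOT}(M)$ and $P_{\DOT}(N)$ have homology concentrated in degree zero, equal to $M$ and $N$ respectively). This gives the augmented complex
\[
  \cdots \rightarrow Y_2 \rightarrow Y_1 \rightarrow Y_0 \rightarrow M\ot N \rightarrow 0
\]
as an exact sequence of vector spaces, and hence of $\ttp$-modules by the previous step.

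Finally I would use Lemma~\ref{lem:proj2}, combined with the hypothesis that each $\tau^{}_{B,i}$ is compatible with a chosen embedding of $P_i(M)$ into a free $A$-module, to conclude that every $Y_{i,j}$ is a projective $\ttp$-module. (Here one notes that projectivity of $P_j(N)$ as a $B$-module means $P_j(N)$ is a summand of a free $B$-module, and the argument of Lemma~\ref{lem:proj2}, which treats the case $P_i(M)=A$, $P_j(N)=B$, extends additively to summands of free modules on either side.) Since finite direct sums of projectives are projective, each $Y_n=\bigoplus_{i+j=n} Y_{i,j}$ is projective. There is no real obstacle here; the content of the theorem is entirely carried by the three lemmas, and the only step requiring a moment of thought is verifying that the embedding compatibility of Definition~\ref{compatible-with-embedding2} suffices globally once one passes from free modules to their summands.
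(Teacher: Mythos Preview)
Your proposal is correct and follows exactly the paper's own approach: the proof of Theorem~\ref{maintheorem2} consists precisely of combining Lemmas~\ref{augmented-algebra-action}, \ref{lem:exact2}, and~\ref{lem:proj2}. Your added commentary on what each lemma contributes is accurate and matches the intended logic.
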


\vspace{1ex}

\subsection*{Examples}
Resolutions that may be constructed in this way include the
Koszul resolution of $k$ for a twisted tensor product of two
Koszul algebras (see the proof of~\cite[Proposition~1.8]{WW2}) and
a resolution for a twisted tensor product of algebras whose
twisting map is given by a bicharacter on grading groups (see~\cite{BerghOppermann08}). 
We give another class of examples in the next section.


\section{Resolutions for Ore extensions}\label{sec:Ore2}
In Section~\ref{sec:Ore}, we considered resolutions
of an Ore extension algebra as a bimodule over itself.  Here, we consider (left) modules
over an Ore extension and show how to construct projective resolutions
of these modules by regarding the Ore extension as a twisted tensor product.
Gopalakrishnan and Sridharan~\cite{GS} studied Ore extensions $R[x; \sigma,\delta]$
in case $\sigma$ is the identity automorphism.  They showed 
that if $M$ is a (left) module over $R[x; 1, \delta]$, then
an $R$-projective resolution of $M$ lifts to an
$R[x; 1, \delta]$-projective resolution.
Here we allow arbitrary automorphisms $\sigma$
of $R$ and give conditions under which
an $R$-projective resolution of an $R[x; \sigma, \delta]$-module $M$
lifts to an
$R[x; \sigma, \delta]$-projective resolution.

Again, let $R$ be a $k$-algebra and $\sigma$ a $k$-algebra automorphism
of $R$.
Let $\delta$ be a left $\sigma$-derivation of $R$ (see~(\ref{def:deriv}))
and consider the Ore extension $R[x; \sigma, \delta]$.
Let $A=R$, $B=k[x]$, and $\tau: B\ot A\rightarrow A\ot B$ be the
twisting map determined by $\tau(x\ot r) = \sigma(r)\ot x + \delta(r)\ot 1$
for all $r\in R$, as in Section~\ref{sec:Ore}, so
that $R[x; \sigma, \delta]$ is the twisted tensor product
$\ttp$.

\vspace{2ex}

\subsection*{Modules over Ore extensions}
Consider an $R[x;\sigma,\delta]$-module $M$.
Assume that on restriction to $R$, there is an isomorphism of $R$-modules,
$\phi: M\stackrel{\sim}{\longrightarrow} M^{\sigma}$, where 
$M^{\sigma}$ is the vector space $M$
with $R$-module action given by $r\cdot_{\sigma} m = \sigma(r)\cdot m$
for all $r\in R$ and $m\in M$. 
Then $M$ is compatible with $\tau$: 
We define $\tau^{}_{B,M}: =B\ot M \rightarrow M\ot B$
by setting
\begin{eqnarray*}
  \tau^{}_{B,M}(1\ot m) & = & m\ot 1 \, , \\
  \tau^{}_{B,M}(x\ot m) &= & \phi(m)\ot x + x m\ot 1
\qquad\text{for all}\quad m\in M
\end{eqnarray*}
and extending  by applying compatibility condition~(\ref{comp1}).
That is, since the algebra $B=k[x]$ is free on the generator $x$, for each  element
$m$ of $M$, 
we may define $\tau^{}_{B,M}(x^n\ot m)$ by applying (\ref{comp1}) to $x \ot x^{n-1}\ot m$. 
We check that (\ref{comp2}) holds for elements of the form
$x\ot r\ot m$, where $r\in R$ and $m\in M$. 
Then a careful induction on the power of $x$ shows that
(\ref{comp2}) holds for all elements of the form $x^n\ot r\ot m$.

For example, if $R[x;\sigma,\delta]$ 
is an augmented algebra with
augmentation $\varepsilon : R[x;\sigma,\delta]\rightarrow k$ for which
$\varepsilon \sigma = \varepsilon$, then $\varepsilon \delta=0$ and 
the field $k$ as a module over $R[x;\sigma,\delta]$  
via $\varepsilon$ has the property that $k\cong k^{\sigma}$,
and so $k$ is compatible with $\tau$.

\vspace{3ex}

\subsection*{Projective resolutions}
Let $P_{\DOT}(M)$ be a projective resolution of $M$ as an $R$-module:
\[
  \cdots \stackrel{d_2}{\longrightarrow} P_1(M)
  \stackrel{d_1}{\longrightarrow} P_0(M) \stackrel{\mu}{\longrightarrow}M
  \rightarrow 0 .
\]
For each $i$, set $P_i^{\sigma}(M)=(P_i(M))^{\sigma}$.
Then 
\[
  \cdots\stackrel{d_2}{\longrightarrow} 
   P_1^{\sigma}(M)\stackrel{d_2}{\longrightarrow} P_{0}^{\sigma}(M)
   \stackrel{ \phi^{-1}\mu}{\relbar\joinrel\relbar\joinrel\longrightarrow} M
 \rightarrow 0
\]
is also a projective resolution of $M$ as an $R$-module. 
By the Comparison Theorem, there is an $R$-module chain map from
$P_{\DOT}(M)$ to $P_{\DOT}^{\sigma}(M)$ lifting the identity map $M\rightarrow M$, 
which we view as 
a $k$-linear chain map 
\begin{equation}\label{eqn:sigma-tilde}
\tsigma_{\DOT}
: P_{\DOT}(M)\rightarrow P_{\DOT}(M)
\end{equation}
with 
${\tsigma}_i (rz) = \tsigma(r){\tsigma}_i(z)$ for all $i\geq 0$, 
$r\in R$, and $z\in P_i(M)$.
We will assume for Theorem~\ref{thm:main-Ore} below that 
each ${\tsigma}_i$ is bijective.
Let $P_{\DOT}(B)$ be the Koszul resolution of $k$ for $B=k[x]$,
\begin{equation}\label{eqn:K}
  0 \rightarrow k[x]\stackrel{x\cdot}{\longrightarrow}
    k[x] \stackrel{\epsilon}{\longrightarrow} k \rightarrow 0 \, ,
\end{equation}
where $\epsilon(x)=0$.
The following two lemmas are proven as in~\cite{GS} (where
Gopalakrishnan and Sridharan proved the special case $\sigma =1$). 
We include details for completeness. 

\vspace{3ex}

\begin{lemma}\label{lem:B}
Let $P$ be a projective $R$-module.
There is an $R[x;\sigma,\delta]$-module structure on $P$ that extends
the action of $R$.
\end{lemma}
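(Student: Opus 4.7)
The plan is to construct a single $k$-linear operator $\theta\colon P \to P$ satisfying the twisted Leibniz rule
\[
\theta(rp) = \sigma(r)\theta(p) + \delta(r)p \quad \text{for all } r \in R,\ p \in P,
\]
and then declare $x \cdot p := \theta(p)$. Since $R[x;\sigma,\delta] \cong R \otimes k[x]$ as a vector space, every element may be written uniquely as $\sum r_n x^n$, and we extend the action by $\bigl(\sum r_n x^n\bigr)\cdot p := \sum r_n \theta^n(p)$. The displayed identity is exactly the Ore relation $xr = \sigma(r)x + \delta(r)$ applied to $p$, so this prescription is compatible with the multiplication of $R[x;\sigma,\delta]$ and extends the original $R$-action; equivalently, an $R[x;\sigma,\delta]$-module structure on $P$ extending the $R$-action is the same datum as a $k$-linear $\theta$ satisfying the twisted Leibniz rule.

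First I would handle the free case. If $F$ is free on a basis $\{e_i\}_{i \in I}$, define $\theta_F\bigl(\sum r_i e_i\bigr) := \sum \delta(r_i)\, e_i$. A one-line calculation using $\delta(rs) = \delta(r)s + \sigma(r)\delta(s)$ shows that $\theta_F(rp) = \sigma(r)\theta_F(p) + \delta(r)p$ for all $r \in R$ and $p \in F$. For a general projective $P$, pick $R$-linear maps $s\colon P \hookrightarrow F$ and $\pi\colon F \twoheadrightarrow P$ with $\pi s = \id_P$ and $F$ free, and define $\theta := \pi \circ \theta_F \circ s$. Then by $R$-linearity of $s$ and $\pi$,
\[
\theta(rp) = \pi\theta_F(rs(p)) = \pi\bigl(\sigma(r)\theta_F(s(p)) + \delta(r)s(p)\bigr) = \sigma(r)\theta(p) + \delta(r)p,
\]
as required.

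The only conceptual subtlety is that $\theta^n \neq \pi \theta_F^n s$ in general, so the $R[x;\sigma,\delta]$-structure on $F$ does not literally restrict to one on $P$ through the splitting. This does not matter, however: the defining relations of $R[x;\sigma,\delta]$ involve only single powers of $x$ paired with elements of $R$, so once one exhibits a $k$-linear operator on $P$ satisfying the twisted Leibniz rule, its iterates $\theta^n$ automatically provide the $x^n$-action consistently with the algebra structure. This is the main obstacle to the argument, and it is dissolved by adopting the universal-property perspective above rather than attempting a direct restriction of structures from $F$ to $P$.
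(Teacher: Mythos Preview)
Your proof is correct and follows essentially the same route as the paper: define the action of $x$ on a free module via $\delta$ on coefficients, then transfer to a projective summand by $\theta = \pi\circ\theta_F\circ s$. The paper leaves the well-definedness of the resulting $R[x;\sigma,\delta]$-action to ``one checks,'' whereas you make explicit the universal-property reason (a single $\theta$ satisfying the twisted Leibniz rule suffices) and flag the non-issue that $\theta^n\neq \pi\theta_F^n s$; this is a helpful clarification but not a different argument.
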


\begin{proof}
First consider the case that $P=R$, the left regular module.
Let $x$ act on $R$ by $x\cdot r = \delta(r)$ for all $r\in R$. 
One checks that the action of $xr$ in $R[x; \sigma,\delta]$
agrees with that of $\sigma(r)x + \delta(r)$ on $P$, for all $r\in R$.
Next, if $P$ is a free module, it is a direct sum of copies of $R$,
and $x$ acts on each copy in this way. 
Finally, in general, $P$ is a direct summand of a free $R$-module $F$.
Let $\iota : P\rightarrow F$ and $\pi : F\rightarrow P$ be $R$-module
homomorphisms for which $\pi \iota$ is the identity map.
Define $x\cdot p = \pi (x\cdot \iota(p))$ for all $p\in P$, where the
action of $x$ on $\iota(p)$ is as given previously for a free module.
Again one checks that the actions of $xr$ and of $\sigma(r)x + \delta(r)$
agree, and so $P$ is an $R[x;\sigma,\delta]$-module as claimed.
\end{proof}

\vspace{3ex}

\subsection*{Compatibility requirements}
We will use the next lemma to show that
the resolution $P_{\DOT}(M)$ of $M$ as an $R$-module is compatible with
the twisting map $\tau$ (see Lemma~\ref{lem:last}).
Let $f:M\rightarrow M$ be the function given by the action of $x$ on 
the $R[x;\sigma,\delta]$-module $M$.

\vspace{1ex}

\begin{lemma}\label{lem:C}
There is a $k$-linear chain map ${\tdelta}_{\DOT} : P_{\DOT}(M)
\rightarrow P_{\DOT}(M)$ lifting $f:M\rightarrow M$ 
such that for each $i\geq 0$, 
${\tdelta}_i(rz)=\sigma(r)
{\tdelta}_i(z) + \delta(r)z$
for all $r\in R$ and $z\in P_i(M)$. 
\end{lemma}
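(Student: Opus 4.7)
The plan is to mimic the argument of Gopalakrishnan--Sridharan~\cite{GS} by first producing a candidate ${\tdelta}_{\bu}^{(0)}$ that automatically satisfies the twisted-Leibniz rule degreewise, and then inductively correcting it to be a chain map lifting $f$ by adding $\sigma$-twisted $R$-linear maps supplied by the projectivity of each $P_i(M)$.

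First I would apply Lemma~\ref{lem:B} to extend the $R$-module structure on each $P_i(M)$ to an $R[x;\sigma,\delta]$-module structure, and set ${\tdelta}_i^{(0)}(z) := x\cdot z$. The Ore relation $xr=\sigma(r)x+\delta(r)$ instantly gives
\[
{\tdelta}_i^{(0)}(rz) = \sigma(r)\,{\tdelta}_i^{(0)}(z) + \delta(r)\,z,
\]
so the twisted-Leibniz rule is satisfied. However the $x$-actions supplied by Lemma~\ref{lem:B} are chosen independently for each $i$ and make no reference to the differentials or the augmentation, so the defects
\[
g_i := d_i\,{\tdelta}_i^{(0)} - {\tdelta}_{i-1}^{(0)}\,d_i, \qquad h := \mu\,{\tdelta}_0^{(0)} - f\mu
\]
need not vanish. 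A direct computation using the twisted-Leibniz rule for ${\tdelta}_i^{(0)}$ together with the identity $f(rm)=\sigma(r)f(m)+\delta(r)m$ (valid since $f$ is the $x$-action on the $R[x;\sigma,\delta]$-module $M$) shows that the $\delta(r)(\cdot)$ contributions cancel, leaving $g_i(rz)=\sigma(r)g_i(z)$ and $h(rz)=\sigma(r)h(z)$. Thus $g_i$ and $h$ are honest $R$-module maps $P_i(M)\to P_{i-1}^{\sigma}(M)$ and $P_0(M)\to M^{\sigma}$, respectively.

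Next I would inductively produce $R$-linear maps $c_i: P_i(M)\to P_i^{\sigma}(M)$. For $i=0$, lift $-h$ through the surjection $\mu: P_0^{\sigma}(M) \to M^{\sigma}$ using projectivity of $P_0(M)$ to obtain $c_0$ with $\mu c_0=-h$. Given $c_0,\ldots,c_{i-1}$ satisfying $d_j c_j = c_{j-1}d_j - g_j$, the sign relation $d_{i-1}g_i = -g_{i-1}d_i$ (immediate from $d^2=0$) combined with the inductive identity yields $d_{i-1}(c_{i-1}d_i - g_i)=0$, so $c_{i-1}d_i - g_i$ lands in $\Ima d_i$; the base case $\mu(c_0 d_1 - g_1)=0$ is the analogous check using $\mu d_1=0$. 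Projectivity of $P_i(M)$ then supplies $c_i$ with $d_i c_i = c_{i-1}d_i - g_i$. Setting ${\tdelta}_i := {\tdelta}_i^{(0)} + c_i$, the $\sigma$-twisted $R$-linearity of $c_i$ preserves the twisted-Leibniz rule, and by construction $\mu {\tdelta}_0 = f\mu$ and $d_i {\tdelta}_i = {\tdelta}_{i-1}d_i$.

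The main technical obstacle is the inductive verification that the error term $c_{i-1}d_i - g_i$ lies in $\Ima d_i$ at each stage, which depends on the sign relation $d_{i-1}g_i + g_{i-1}d_i = 0$ and careful bookkeeping of the cross-terms produced by the $\sigma$-twist. Once this is in place, the result follows by assembling the pieces in the natural way.
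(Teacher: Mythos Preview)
Your proposal is correct and follows essentially the same strategy as the paper: use Lemma~\ref{lem:B} to get an initial candidate satisfying the twisted-Leibniz rule, observe the resulting defects are $R$-linear into the $\sigma$-twisted targets, and correct inductively via projectivity. The only difference is organizational---the paper corrects degree by degree using the already-fixed ${\tdelta}_{i-1}$ in the defect $d_i{\tdelta}_i'-{\tdelta}_{i-1}d_i$ (so closedness is immediate from ${\tdelta}_{i-1}$ being a chain map), whereas you keep all the raw defects $g_i$ and need the auxiliary relation $d_{i-1}g_i+g_{i-1}d_i=0$; both bookkeepings lead to the same construction.
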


\begin{proof}
If $i=0$, let ${\tdelta}'_0$ be the action of $x$ on $P_0(M)$
given by Lemma~\ref{lem:B}.
Then $${\tdelta}_0'(rz) - \sigma(r){\tdelta}_0' (z) = \delta(r)z$$
for $r\in R$, $z\in P_0(M)$. 
One checks that $\mu{\tdelta}_0' - f \mu :
P_0(M)\rightarrow M^{\sigma}$ is an $R$-module homomorphism. 
As $P_0(M)$ is a projective $R$-module, there is an $R$-module homomorphism
${\tdelta}_0'':P_0(M)\rightarrow P_0^{\sigma}(M)$ such that
$\mu{\tdelta}'_0 - f \mu = \mu {\tdelta}''_0$.
Let ${\tdelta}_0 = {\tdelta}_0' -{\tdelta}_0''$.
One may check this satisfies the equation in the lemma.

Now fix $i>0$ and assume there are $k$-linear maps 
${\tdelta}_j: P_j(M)\rightarrow P_j(M)$ such that
${\tdelta}_j (rz) = \sigma(r){\tdelta}_j(z) + \delta(r) z$
and  $d_j{\tdelta}_j
= {\tdelta}_{j-1} d_j$ for all $j$, $0\leq j < i$, and 
 $r\in R$, $z\in P_j(M)$. 
Let ${\tdelta}_i': P_i(M)\rightarrow P_i(M)$ be the action of $x$
on $P_i(M)$ given in Lemma~\ref{lem:B}, so that
${\tdelta}_i'(rz) = \sigma(r) {\tdelta}_i'(z) + \delta(r)z$
for all $r\in R$, $z\in P_i(M)$.
Consider the map
$$
   d_i{\tdelta}_i' - {\tdelta}_{i-1} d_i:
   P_i(M) \longrightarrow P_{i-1}^{\sigma}(M) \, . 
$$
A calculation shows that it is an $R$-module homomorphism.
Since ${\tdelta}_{i-1}$ is a chain map, 
$$
   d_{i-1}( d_i {\tdelta}_i' - {\tdelta}_{i-1} d_i) = 0 \, , 
$$ 
and so the image of $d_i{\tdelta}_i '  - {\tdelta}_{i-1} d_i$
lies in $\ker(d_{i-1})=\text{Im}(d_i)$.
Since $P_i(M)$ is projective as an $R$-module, there
is an $R$-homomorphism ${\tdelta}_i'' : P_i(M)\rightarrow P_i^{\sigma}(M)$
such that $d_i{\tdelta}_i ' - {\tdelta}_{i-1} d_i =
d_i {\tdelta}_i ''$. 
Let ${\tdelta}_i = {\tdelta}_i' - {\tdelta}_i''$,
so that $d_i{\tdelta}_i = {\tdelta}_{i-1} d_i$ by construction. 
One checks that for all $r\in R$ and $z\in P_i(M)$, 
$$  {\tdelta}_i(rz) =  {\tdelta}_i' (rz) - {\tdelta}_i''(rz)
    = \sigma(r) {\tdelta}_i'(z) + \delta(r) z - \sigma(r)
    {\tdelta}_i''(z)
   = \sigma(r) {\tdelta}_i (z)
   + \delta(r) z \, .
$$
\end{proof}

\begin{lemma}\label{lem:last}
The resolution $P_{\DOT}(M)$ is compatible with the twisting map $\tau$.
\end{lemma}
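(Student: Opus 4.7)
The plan is to mimic, at the resolution level, the construction of $\tau^{}_{B,M}$ given above. I would define
$\tau^{}_{B,i}:B\ot P_i(M)\rightarrow P_i(M)\ot B$ on generators by
\[
\tau^{}_{B,i}(1\ot z)=z\ot 1,\qquad \tau^{}_{B,i}(x\ot z)=\tsigma_i(z)\ot x+\tdelta_i(z)\ot 1,
\]
using the $k$-linear chain maps $\tsigma_{\DOT}$ of~(\ref{eqn:sigma-tilde}) and $\tdelta_{\DOT}$ of Lemma~\ref{lem:C}, and then extend to $x^n\ot z$ for $n\ge 2$ by recursively applying compatibility condition~(\ref{comp1}) to $x\ot x^{n-1}\ot z$, exactly as was done for $\tau^{}_{B,M}$ itself. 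Bijectivity of $\tau^{}_{B,i}$ follows from bijectivity of $\tsigma_i$, in the same way that bijectivity of $\tau^{}_{B,M}$ came from that of $\phi$.

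Condition~(\ref{comp1}) then holds by construction, so the central verification is~(\ref{comp2}) on elements $x\ot r\ot z$ with $r\in R$ and $z\in P_i(M)$. Using the defining identities $\tsigma_i(rz)=\sigma(r)\tsigma_i(z)$ and $\tdelta_i(rz)=\sigma(r)\tdelta_i(z)+\delta(r)z$, together with $\tau(x\ot r)=\sigma(r)\ot x+\delta(r)\ot 1$, both sides of~(\ref{comp2}) should collapse to
\[
\sigma(r)\tsigma_i(z)\ot x+\sigma(r)\tdelta_i(z)\ot 1+\delta(r)z\ot 1.
\]
The extension of~(\ref{comp2}) from $x\ot r\ot z$ to general $x^n\ot r\ot z$ then proceeds by induction on $n$, as suggested in the paragraph defining $\tau^{}_{B,M}$; this is a routine but delicate bookkeeping step, combining~(\ref{comp1}) with the relation~(\ref{eqn:taumm}) for $\tau$. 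I expect this induction to be the main obstacle, since one must track how $\tsigma_i$ and $\tdelta_i$ interleave under iterated applications of $\tau$, and the mixed terms created by $\delta$ at each stage must be shown to reassemble correctly.

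It remains to verify that $\tau^{}_{B,\DOT}$ is a chain map lifting $\tau^{}_{B,M}$. On $1\ot z$ the chain map condition is trivial, and on $x\ot z$ it reduces to the two identities $d_{i}\tsigma_i=\tsigma_{i-1}d_i$ and $d_{i}\tdelta_i=\tdelta_{i-1}d_i$, which are precisely the statements that $\tsigma_{\DOT}$ and $\tdelta_{\DOT}$ are chain maps; the case of higher $x^n$ follows from the defining recursion and another short induction. Finally, to see that $\tau^{}_{B,\DOT}$ lifts $\tau^{}_{B,M}$, I would evaluate $(\mu\ot 1)\tau^{}_{B,0}$ on $x\ot z$ and apply $\mu\tsigma_0=\phi\mu$ (since $\tsigma_{\DOT}$ lifts the identity $M\to M$ with respect to the two augmentations $\mu$ and $\phi^{-1}\mu$) together with $\mu\tdelta_0=f\mu$; this matches $\tau^{}_{B,M}(x\ot \mu(z))=\phi(\mu(z))\ot x+x\mu(z)\ot 1$, and the analogous identity for higher powers of $x$ follows once more by induction.
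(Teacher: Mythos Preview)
Your proposal is correct and follows essentially the same approach as the paper: define $\tau^{}_{B,i}$ on $1\ot z$ and $x\ot z$ via $\tsigma_i$ and $\tdelta_i$, extend to higher powers of $x$ using condition~(\ref{comp1}), and verify condition~(\ref{comp2}) on $x\ot r\ot z$ by the same two-sided computation you outline, concluding by induction on $n$. If anything, you are more thorough than the paper's proof, which does not explicitly address bijectivity, the chain map property, or the lifting of $\tau^{}_{B,M}$; your observations that these reduce to the bijectivity of $\tsigma_i$ and to the chain map identities for $\tsigma_{\DOT}$ and $\tdelta_{\DOT}$ are exactly what is needed.
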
 

\begin{proof}
Define $\tau^{}_{B,i}: B\ot P_i(M)\rightarrow P_i(M)\ot B$ 
by
\begin{eqnarray*}
  \tau^{}_{B,i}(1\ot z) & = & z\ot 1 \, , \\
   \tau^{}_{B,i}(x\ot z)& = &{\tsigma}_i(z)\ot x +
    {\tdelta}_i(z)\ot 1
    \qquad\text{for all}\quad z\in P_i(M)\, , 
\end{eqnarray*}
where ${\tsigma}_{\DOT}$ is the chain map of (\ref{eqn:sigma-tilde}), 
${\tdelta}_{\DOT}$ is the chain map of Lemma~\ref{lem:C},
and we extend $\tau^{}_{B,i}$ to $B\ot P_i(M)$ as before 
by requiring that compatibility conditions~(\ref{comp1}) and~(\ref{comp2}) hold.
We check condition~(\ref{comp2}) in one case as an example: 
$$
   \tau^{}_{B,i}(x\ot rz) =  {\tsigma}_i (rz)\ot x + {\tdelta}_i(rz)\ot 1 
    = \sigma(r) {\tsigma}_i(z) \ot x + \sigma(r) {\tdelta}_i(z)\ot 1
    + \delta(r) z\ot 1 \, ,
$$
for all $r\in R$, and $z\in P_i(M)$,
 while on the other hand,
$$
\begin{aligned}
(\rho^{}_{A,i}\ot 1) &(1\ot \tau^{}_{B,i})(\tau \ot 1) (x\ot r\ot z) \\
&=    (\rho^{}_{A,i}\ot 1) (1\ot \tau^{}_{B,i}) (\sigma(r)\ot x\ot z + \delta(r)
    \ot 1 \ot z) \\
  &= (\rho^{}_{A,i}\ot 1) ( \sigma(r)\ot {\tsigma}_i (z) \ot x 
    + \sigma(r)\ot {\tdelta}_i(z)\ot 1 + \delta(r)\ot z\ot 1) \\
  &= \sigma(r) {\tsigma}_i(z)\ot x + \sigma(r) {\tdelta}_i(z)\ot 1
    + \delta(r) z \ot 1 \, ;
\end{aligned}
$$
Condition~(\ref{comp2}) holds for all $x^n\ot rz$ by induction on $n$. 
\end{proof}

\vspace{0ex}

\subsection*{Twisting resolutions for an Ore extension}
We now construct a projective resolution of
$M$ as an $R[x;\sigma,\delta]$-module from a projective
resolution of $M$ as an $R$-module.
We take the twisted product of two resolutions:
the $R$-projective resolution of $M$
and the Koszul resolution~(\ref{eqn:K}) of $k$ as a module
over $B=k[x]$.

\vspace{1ex}

\begin{thm}\label{thm:main-Ore}
Let $R[x;\sigma,\delta]$ be an Ore extension.
Let $M$ be an $R[x;\sigma,\delta]$-module for which $M^{\sigma}\cong M$
as $R$-modules. 
Consider a projective resolution $P_{\DOT}(M)$ of $M$ as an $R$-module
and suppose
that each  map ${\tsigma}_{i}: P_{i}(M)\rightarrow P_{i}(M)$
of~(\ref{eqn:sigma-tilde}) is bijective. 
For each $i\geq 0$, set 
$$
  Y_{i,0} =  Y_{i,1}=P_i(M)\ot k[x] \, 
\quad \mbox{ and }\quad Y_{i,j}=0 \ \ \text{ for all } j>1
$$
as in Lemma~\ref{augmented-algebra-action}. 
Then $Y_{\DOT}$ is a projective 
resolution of $M$ as an $R[x;\sigma,\delta]$-module.
\end{thm}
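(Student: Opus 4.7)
The plan is to recognize this as a direct application of Theorem~\ref{maintheorem2}, taking $A=R$ and $B=k[x]$ with the twisting map $\tau(x\ot r)=\sigma(r)\ot x+\delta(r)\ot 1$ of Section~\ref{sec:Ore2}, the given $A$-module $M$, the trivial $B$-module $N=k$ (so that $M\ot N\cong M$), and for $P_{\DOT}(N)$ the two-term Koszul resolution~(\ref{eqn:K}) of $k$ as a $k[x]$-module. Because $P_j(N)=0$ for $j\geq 2$, the bicomplex $Y_{\DOT,\DOT}$ collapses to exactly the two columns in the statement, $Y_{i,0}=Y_{i,1}=P_i(M)\ot k[x]$ and $Y_{i,j}=0$ for $j>1$.

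Most of the input to Theorem~\ref{maintheorem2} is already in hand. The module $M$ is compatible with $\tau$ by the construction given in the discussion immediately preceding the theorem, built from the hypothesized isomorphism $\phi\colon M\xrightarrow{\sim} M^{\sigma}$. The resolution $P_{\DOT}(M)$ is compatible with $\tau$ by Lemma~\ref{lem:last}, whose maps $\tau^{}_{B,i}$ are assembled from the bijective chain map $\tilde\sigma_{\DOT}$ of~(\ref{eqn:sigma-tilde}) and the $\sigma$-twisted chain map $\tilde\delta_{\DOT}$ of Lemma~\ref{lem:C}. Consequently, Lemma~\ref{augmented-algebra-action} and Lemma~\ref{lem:exact2} already yield that $Y_{\DOT}$ is an exact complex of $R[x;\sigma,\delta]$-modules augmented by $M$.

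The remaining point, and the one I expect to be the main obstacle, is projectivity of each $Y_{i,j}=P_i(M)\ot k[x]$ as an $R[x;\sigma,\delta]$-module with the action determined by~(\ref{eqn:mod-str}). Rather than verifying the embedding-compatibility hypothesis of Lemma~\ref{lem:proj2}, which is awkward because $\tilde\sigma_i$ and $\tilde\delta_i$ need not agree componentwise with $\sigma$ and $\delta$ on any ambient free $R$-module, I would argue directly in the spirit of the remark following Lemma~\ref{lem:proj2}. Write $P_i(M)$ as a direct summand of a free $R$-module $F=R^{\oplus I}$ via $R$-linear maps $\iota\colon P_i(M)\hookrightarrow F$, $\pi\colon F\twoheadrightarrow P_i(M)$ with $\pi\iota=1$. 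Use Lemma~\ref{lem:B} to extend the $R$-action on $F$ to an $R[x;\sigma,\delta]$-action in such a way that both $\iota$ and $\pi$ are $R[x;\sigma,\delta]$-linear with respect to the $\tau^{}_{B,i}$-structure on $P_i(M)\ot k[x]$; here the bijectivity of $\tilde\sigma_i$ is used to arrange that the induced splitting persists after tensoring with $k[x]$. Since $F\ot k[x]\cong R[x;\sigma,\delta]^{\oplus I}$ is a free $R[x;\sigma,\delta]$-module, the summand $P_i(M)\ot k[x]$ is projective.

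With the projectivity of the $Y_{i,j}$ in hand, Theorem~\ref{maintheorem2} applies and gives the desired resolution; the differentials are the ones inherited from $P_{\DOT}(M)$ and the Koszul complex~(\ref{eqn:K}), with the signs prescribed by Definition~\ref{def:tpr}. The only non-routine technical input is the bookkeeping in the previous paragraph, which amounts to choosing the chain maps $\tilde\sigma_{\DOT}$ and $\tilde\delta_{\DOT}$ compatibly with a free presentation of $P_{\DOT}(M)$; once set up, everything else follows from Lemmas~\ref{lem:B}, \ref{lem:C}, \ref{lem:last}, together with Theorem~\ref{maintheorem2}.
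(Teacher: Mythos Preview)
Your overall framework matches the paper's: invoke Lemma~\ref{lem:last} for compatibility, then Lemmas~\ref{augmented-algebra-action} and~\ref{lem:exact2} for exactness, and finally verify projectivity of the $Y_{i,j}$ directly rather than through Lemma~\ref{lem:proj2}. Where you diverge is in the projectivity step, and there your argument has a genuine gap.

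You propose to embed $P_i(M)$ as an $R$-summand of a free module $F=R^{\oplus I}$ and then assert that the splitting maps $\iota,\pi$ can be made $R[x;\sigma,\delta]$-linear for the $\tau_{B,i}$-structure on $P_i(M)\ot k[x]$ and the free-module structure on $F\ot k[x]$. For $\iota\ot 1$ to be $R[x;\sigma,\delta]$-linear one would need $\iota\circ\tilde\sigma_i=\sigma^{\oplus I}\circ\iota$ and $\iota\circ\tilde\delta_i=\delta^{\oplus I}\circ\iota$; but $\tilde\sigma_i$ and $\tilde\delta_i$ were produced by the Comparison Theorem and Lemma~\ref{lem:C}, and there is no reason $\sigma^{\oplus I}$ or $\delta^{\oplus I}$ should even preserve the image of $\iota$, let alone restrict to the chosen chain maps. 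You acknowledge exactly this difficulty, then relabel it ``bookkeeping'' and say bijectivity of $\tilde\sigma_i$ ``arranges'' the splitting to persist, without saying how. That is the step that actually needs an argument.

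The paper sidesteps this entirely. Instead of comparing $P_i(M)\ot k[x]$ to a free ambient module, it writes down an explicit $R[x;\sigma,\delta]$-module isomorphism
\[
R[x;\sigma,\delta]\ot_R P_i(M)\ \longrightarrow\ Y_{i,j},\qquad x\ot z\ \longmapsto\ \tilde\sigma_i(z)\ot x+\tilde\delta_i(z)\ot 1,
\]
with inverse $z\ot x\mapsto x\ot\tilde\sigma_i^{-1}(z)-1\ot\tilde\delta_i(\tilde\sigma_i^{-1}(z))$; this is precisely where the bijectivity hypothesis on $\tilde\sigma_i$ is used. Projectivity is then immediate, since $R[x;\sigma,\delta]$ is free over $R$ and tensor induction carries the $R$-projective $P_i(M)$ to a projective $R[x;\sigma,\delta]$-module. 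This approach avoids any need to coordinate $\tilde\sigma_i,\tilde\delta_i$ with an ambient free presentation.
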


\begin{proof}
By Lemma~\ref{lem:last}, $P_{\bu}(M)$ is compatible with $\tau$, and so 
by Lemmas~\ref{augmented-algebra-action} and~\ref{lem:exact2}, the complex $\cdots\rightarrow Y_1\rightarrow Y_0\rightarrow M \rightarrow 0$
is an exact complex of $R[x;\sigma,\delta]$-modules. 
We verify directly that each $Y_{i,j}$ is a projective module:
For each $i\geq 0$ and $j=0,1$, 
\begin{equation}\label{eqn:GS-iso}
Y_{i,j} \cong R[x;\sigma,\delta]\ot_R P_i(M) 
\end{equation} 
via the $R[x;\sigma,\delta]$-homomorphism 
given by
$$
R[x;\sigma,\delta]\ot _R P_i(M)  \longrightarrow  Y_{i,j}\, ,
\quad\quad
    x\ot z  \mapsto  {\tsigma}_i(z)\ot x + {\tdelta}_i(z)\ot 1 \, ,
$$
    with inverse map given by
    $$
    z\ot x \mapsto 
    x\ot {\tsigma}_i^{-1}(z) - 1\ot {\tdelta}_i({\tsigma}_i^{-1}(z))\, .
$$
Then $R[x;\sigma,\delta]\ot_R P_i(M)$
is projective since it is a tensor-induced module and 
$R[x;\sigma,\delta]$ is flat over $R$.
\end{proof} 


\vspace{1ex}

\begin{remark}{\em
When $\sigma$ is the identity, the complex $Y_{\DOT}$ is
precisely that of Gopalakrishnan and Sridharan~\cite[Theorem~1]{GS},
under the isomorphism~(\ref{eqn:GS-iso}) above. 
As a specific class of examples, we obtain in this way, via iterated Ore extension,
the Chevalley-Eilenberg resolution 
of the  $\Ug$-module $k$ for a finite dimensional 
supersolvable Lie algebra $\mathfrak g$.
}
\end{remark}

\vspace{3ex}

\noindent
{\bf Acknowledgments.}
The authors thank Andrew Conner and Peter Goetz for
helpful comments on an earlier version of this paper,
and a referee for suggesting improvements.



\end{document}